\documentclass[11pt]{amsart}

\usepackage[margin=1.4in]{geometry}

\usepackage{amsmath,amssymb,amsthm}

\usepackage{hyperref}

\theoremstyle{plain}
\newtheorem{theorem}{Theorem}[section]
\newtheorem{lemma}[theorem]{Lemma}

\theoremstyle{definition}
\newtheorem{definition}[theorem]{Definition}

\theoremstyle{remark}

\newcommand{\C}{\mathbb{C}}
\newcommand{\R}{\mathbb{R}}

\renewcommand{\S}{\mathbb{S}}

\renewcommand{\b}{\mathbf{b}}
\newcommand{\n}{\mathbf{n}}

\newcommand{\Pcal}{\mathcal{P}}
\newcommand{\Qcal}{\mathcal{Q}}
\newcommand{\Ccal}{\mathcal{C}}

\newcommand{\Tsf}{\mathsf{T}}
\newcommand{\Nsf}{\mathsf{N}}
\newcommand{\psf}{\mathsf{p}}

\DeclareMathOperator{\Sym}{Sym}
\DeclareMathOperator{\tr}{tr}

\numberwithin{equation}{section}

\title[Loss of normal flatness under mean curvature flow]{Counterexamples to preservation of flat normal bundles under mean curvature flow}
\author[K. Kunikawa]{Keita Kunikawa}
\address{Department of Mathematical Sciences, Tokushima University, 2-1 Minamijyousanjima-cho, Tokushima 770-8506, Japan}
\email{kunikawa@tokushima-u.ac.jp}
\date{}

\begin{document}

\begin{abstract}
We construct an embedded torus and an entire graph in $\R^4$ whose normal bundles are initially flat but lose this property instantaneously under mean curvature flow. We also give an example showing that the parallel principal normal condition is not preserved under mean curvature flow, even though the normal bundle remains flat along the flow.
\end{abstract}

\maketitle

\section{Introduction}
\subsection{Flat normal bundles and mean curvature flow}
Mean curvature flow in higher codimension differs substantially from the hypersurface case. For hypersurfaces, the normal bundle is one-dimensional and there is a single shape operator, while in higher codimension the normal connection may have nonzero curvature and several shape operators interact with one another. These additional geometric quantities make the evolution equations more complicated and prevent many arguments from the hypersurface theory from extending directly to arbitrary codimension. See, for example, \cite{Smoczyk12} for an overview of mean curvature flow in higher codimension.

A similar contrast already appears in the Bernstein problem for minimal submanifolds. In contrast to the classical Bernstein theorem for minimal hypersurfaces, there exist nonplanar entire minimal graphs already in dimension two, as illustrated by the holomorphic graph $\C \ni z\mapsto (z,z^2)\in\C^2$, which has unbounded slope. Even imposing bounded slope does not restore rigidity. Indeed, nonplanar smooth entire minimal graphs with bounded slope can be obtained from the Lawson--Osserman construction \cite{LawsonOsserman77} with the desingularization of Ding and Yuan \cite{DingYuan06}. This is in sharp contrast with the codimension one case, where bounded slope implies rigidity by Moser's theorem, and even this assumption can be relaxed to allow controlled unbounded growth of the slope, as shown by Ecker and Huisken \cite{EckerHuisken90}.

Another important difference concerns stability. While every minimal graph of codimension one is stable, this is no longer true in higher codimension. Lawson and Osserman \cite{LawsonOsserman77}, for example, showed the existence of an unstable minimal graph over the unit disk. Thus, even in the graphical setting, the minimal surface system in higher codimension is substantially more complicated than in codimension one.

A particularly useful simplifying assumption in higher codimension is the flatness of the normal bundle. A submanifold is said to have \emph{flat normal bundle} when the curvature of its normal connection vanishes identically. In Euclidean space, by the Ricci equation, this condition is equivalent to the commutativity of the shape operators, so that at each point they can be simultaneously diagonalized. This eliminates some of the characteristic interactions between different normal directions and allows several arguments from the hypersurface theory to be recovered in higher codimension.

In the context of the Bernstein problem, Xin \cite{Xin05} exploited this structure without assuming that the submanifold is globally graphical. Under the flat normal bundle condition, the Simons identity and the equation for the $w$-function associated with the generalized Gauss map take forms analogous to those in the hypersurface case. In particular, when $w>0$, one obtains a hypersurface-type stability inequality. Using these ingredients, Xin established Bernstein type rigidity results under suitable growth assumptions.

Smoczyk, G.~Wang and Xin \cite{SmoczykWangXin06} subsequently sharpened Xin's approach. By improving the estimates for the second fundamental form, they removed the dimensional restriction in Xin's Bernstein theorem \cite{Xin05} and obtained a higher codimensional analogue of the Ecker--Huisken theorem \cite{EckerHuisken90}. They also established Schoen--Simon--Yau type curvature estimates \cite{SchoenSimonYau75} and related rigidity results.

Independently, M.-T.\ Wang \cite{Wang04} also showed that minimal graphs with flat normal bundle are stable in the usual second variation sense, obtaining corresponding curvature estimates and Bernstein type results. Thus the flat normal bundle condition provides a natural higher codimensional setting in which important parts of the stability theory, curvature estimates, and Bernstein theory for minimal hypersurfaces continue to hold. 

The flat normal bundle condition has also been used in the study of self-similar and translating solutions to mean curvature flow. Motivated by the $F$-stability theory of Colding and Minicozzi \cite{ColdingMinicozzi12} in codimension one, Arezzo and Sun \cite{ArezzoSun13} studied $F$-stability for self-shrinkers of arbitrary codimension with flat normal bundle. For translating solitons, in a different direction, the author \cite{Kunikawa15} obtained a Moser type Bernstein theorem in arbitrary codimension with flat normal bundle. This extends the hypersurface result of Bao and Shi \cite{BaoShi14}, which assumes bounded slope, to higher codimension and refines it by allowing controlled unbounded growth of the slope.

The above results naturally lead to the question of whether the flatness of the normal bundle is preserved under mean curvature flow. Smoczyk, G.~Wang and Xin \cite{SmoczykWangXin04} studied this question in connection with extending the Ecker--Huisken theory \cite{EckerHuisken89} for entire graphical mean curvature flow to arbitrary codimension, and stated a preservation result for flat normal bundles under a boundedness assumption on the second fundamental form at each time. Their proof of preservation was based on a maximum principle argument applied to the evolution equation for the normal curvature. They then used this preservation statement to derive higher codimensional analogues of several estimates for graphical mean curvature flow in the hypersurface case.

Smoczyk, Wang and Xin \cite{SmoczykWangXin04} themselves later discovered a gap in their preservation argument due to gradient terms in the evolution equation. Thus, it remained an open question whether this was a technical limitation of the proof or whether normal flatness could actually fail. Several years later, Baker remarked that the flat normal bundle condition is not preserved under mean curvature flow \cite{BakerThesis,Baker11}, although no proof or counterexample was given in either work. Baker and Nguyen \cite{BakerNguyen17}, in their study of codimension two surfaces, derived an evolution equation for the normal curvature containing gradient terms. Their formula again makes clear that these gradient terms obstruct a direct application of the maximum principle to conclude preservation of the flat normal bundle condition. Note, however, that their argument still does not show that the flatness actually fails to be preserved.

In revisiting the evolution equation for the normal curvature in arbitrary codimension, we isolate the source term for the normal curvature arising from these gradient terms, which we denote by $\Ccal$. It is a commutator term involving the covariant derivatives of the shape operators. Even when the normal bundle is initially flat, $\Ccal$ need not vanish and can therefore generate normal curvature. This gives a tensorial criterion for non-preservation of the flat normal bundle condition. If $\Ccal$ is nonzero at some point initially, then the flatness is lost instantaneously. We then construct explicit initial submanifolds satisfying this criterion. Our examples show that the gradient terms appearing in the evolution equation derived by Baker and Nguyen \cite{BakerNguyen17} can actually generate normal curvature and lead to the loss of flatness. Consequently, the preservation statement in \cite[Theorem~1]{SmoczykWangXin04} fails in general\footnote{We emphasize that this does not affect the curvature estimates and the Bernstein type results for minimal submanifolds with flat normal bundle in \cite{SmoczykWangXin06}, where preservation of the flat normal bundle under mean curvature flow is not required.}. The examples below provide explicit counterexamples confirming Baker's earlier remark \cite{BakerThesis,Baker11}.

We first realize the above criterion in a compact example by constructing an embedded two-torus in $\R^4$ whose normal bundle is initially flat and for which $\Ccal$ is nonzero at some point.

\begin{theorem}\label{thm:cpt}
There exists an embedded two-torus in $\R^4$ whose normal bundle is initially flat but does not remain flat under mean curvature flow for sufficiently small positive times.
\end{theorem}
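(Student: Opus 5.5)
The plan has three steps: derive the evolution of the normal curvature, pick a concrete normally flat torus, and evaluate the resulting source term at one point.

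\textbf{Evolution of the normal curvature.} Let $\nu_1,\nu_2$ be a local orthonormal normal frame along the flow, with shape operators $A_\alpha=\langle h,\nu_\alpha\rangle$. By the Ricci equation, the normal curvature is represented by the commutator $R^\perp_{12}=[A_1,A_2]$. Differentiating along the flow and using Codazzi and Simons-type identities, I would write
\[
(\partial_t-\Delta)R^\perp = \mathcal{L}(h,R^\perp) + \Ccal .
\]
Here $\mathcal{L}$ is linear in $R^\perp$ with coefficients quadratic in $h$. The term $\Ccal$ is the gradient contribution, schematically $\Ccal_{ij}=-2\sum_k\bigl(\nabla_k A_1\,\nabla_k A_2-\nabla_k A_2\,\nabla_k A_1\bigr)_{ij}$, a commutator of covariant derivatives of the shape operators. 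The claim I need is that at a point where $R^\perp\equiv 0$ near $t=0$ we get $\Delta R^\perp=0$ and $\mathcal{L}=0$. Hence $\partial_t R^\perp|_{t=0}=\Ccal$. By smoothness of the flow, $\Ccal\ne0$ at some point forces $R^\perp\ne0$ for small $t>0$. This is the tensorial criterion described in the introduction.

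\textbf{The torus.} I would take a tube-like surface $F(\theta,\phi)=\gamma(\theta)+r(\theta)\bigl(\cos\phi\,\n(\theta)+\sin\phi\,\b(\theta)\bigr)$ or, more simply, a product-type immersion $F(\theta,\phi)=(a(\theta)\cos\phi,\ a(\theta)\sin\phi,\ c(\theta))$ with a profile $(a,c)$ in $\R^3$. The catch is that surfaces lying in a hyperplane have trivially flat normal bundle and $\Ccal=0$, so the example must genuinely use $\R^4$. My first choice is $F(u,v)=(\alpha(u)\cos v,\alpha(u)\sin v,\beta(u)\cos v,\beta(u)\sin v)$ with $(\alpha,\beta)$ a closed embedded curve avoiding the origin. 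This is a torus invariant under the diagonal circle action. By symmetry its second fundamental form depends only on $u$. In coordinates, $h(\partial_u,\partial_u)$ and $h(\partial_v,\partial_v)$ lie in a normal frame adapted to the curve, and I would check whether the mixed term $h(\partial_u,\partial_v)$ vanishes. If it does, the coordinate directions diagonalize both shape operators simultaneously, and the normal bundle is flat. If that fails, I would instead use an orthogonal-curvature-line torus $F=(\gamma_1(u)+\dots)$ built as a product of plane curves weighted by functions, for example $F(u,v)=(f(v)\,p(u),\,g(v)\,q(u))$ with $p,q$ plane curves. There I would impose orthogonality of the conjugate net directly.

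\textbf{Verifying $\Ccal\neq0$.} This is the step I expect to be the main obstacle. Flatness makes both $A_\alpha$ diagonal in the curvature-line frame $e_1,e_2$. However, $\nabla_kA_\alpha$ has off-diagonal entries proportional to $(\lambda^\alpha_1-\lambda^\alpha_2)\,\omega_{12}(e_k)$, plus contributions from the normal connection form $\omega^\perp_{12}(e_k)$ that mixes $A_1$ and $A_2$. The commutator $\Ccal$ then reduces to an explicit expression in these quantities. I would evaluate it at a single convenient point, say $u=0$, using a specific profile curve chosen to break symmetry: an ellipse, or a circle of radius $\rho$ off-centre. The aim is a nonzero number, for instance $(\lambda^1_1-\lambda^1_2)(\lambda^2_1-\lambda^2_2)\omega_{12}(e_2)\,\omega^\perp_{12}(e_1)\neq0$. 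Circle profiles centred at the origin may give $\Ccal=0$ by excess symmetry, so I would vary the profile until this quantity is nonzero. Embeddedness is immediate once the profile curve is embedded and avoids the origin. The theorem then follows from the criterion in the first step.
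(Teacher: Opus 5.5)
Your first step is the paper's own criterion (Theorem~\ref{thm:nonpreservation_criterion}), and it is fine. The gap is in the construction. The real content of the theorem is a torus that is normally flat with $\Ccal\neq0$, and the proposal never produces one. Worse, the one concrete candidate you write down provably fails.

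Take $F(u,v)=e^{iv}c(u)\in\C^2$, where $c=(\alpha,\beta)$ is a unit-speed plane curve with unit normal $n$, curvature $k$, and $b=\langle c,n\rangle$. Use the frames $e_1=F_u$, $e_2=F_v/|c|$, $\nu_1=e^{iv}n$ and $\nu_2=ie^{iv}\bigl(bc'-\langle c,c'\rangle n\bigr)/|c|$. Then
\[
S^{\nu_1}=\begin{pmatrix}k&0\\0&-b/|c|^2\end{pmatrix},\qquad S^{\nu_2}=\frac{b}{|c|^2}\begin{pmatrix}0&1\\1&0\end{pmatrix}.
\]
So the mixed term does not vanish, and flatness is equivalent to $b\,(k|c|^2+b)\equiv0$. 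Where $b\neq0$, this says the angle between $c$ and $c'$ is locally constant. By continuity it is then constant on the whole curve, because that angle is $0$ mod $\pi$ exactly where $b=0$. Hence $c$ is a radial segment, a logarithmic spiral, or a circle about the origin, and the only closed option is the centred circle.

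That surface is a doubly covered Clifford torus. Your embeddedness claim misses this: $(u,v)$ and $(u',v+\pi)$ have the same image whenever $c(u')=-c(u)$. Its second fundamental form is parallel, so $\Ccal=0$, and it shrinks homothetically, so flatness is actually preserved. ``Varying the profile until $\Ccal\neq0$'' is therefore not an option: flatness is a rigid ODE constraint on the profile, and leaving the centred circle destroys it.

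The fallback $(f(v)p(u),g(v)q(u))$ is left unspecified: no flatness equation is solved, no closed solution is exhibited, and no $\Ccal$ is computed. Its most natural instance, $p=e^{iu}$, $q=e^{imu}$ with $m\neq0$, hits the same wall. The flatness ODE has the first integral $\cos\psi/\sqrt{\cos^2\phi+m^2\sin^2\phi}$ in polar variables, so closedness again forces a circle about the origin. That gives a torus in a round sphere, where $\Ccal=0$.

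Your sample target expression also has the wrong shape, and this hides what is actually needed. Suppose $e_1,e_2$ diagonalize both shape operators with gaps $\delta_\alpha=\lambda^\alpha_1-\lambda^\alpha_2$, and write $\nabla e_1=\omega\otimes e_2$ and $\nabla^\perp\nu_1=\eta\otimes\nu_2$. A direct computation from the definition gives
\[
\Ccal(\nu_1,\nu_2)=\sum_k\omega(e_k)\Bigl[\delta_2\,e_k(\delta_1)-\delta_1\,e_k(\delta_2)-\eta(e_k)\bigl(\delta_1^2+\delta_2^2\bigr)\Bigr]\begin{pmatrix}0&1\\-1&0\end{pmatrix}.
\]
So $\omega$ and $\eta$ only pair along the same direction, never $\omega(e_2)$ with $\eta(e_1)$. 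The $\eta$-term carries $\delta_1^2+\delta_2^2$, not $\delta_1\delta_2$. And there is a derivative term, which is the one that drives the paper's example.

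In particular, surfaces of revolution $\bigl(f(v)e^{iu},w(v)\bigr)\in\C\times\R^2$ always have $\Ccal=0$. There $\omega$ lives only on the orbit direction, along which the $\delta_\alpha$ are constant, and a parallel normal frame exists.

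The missing idea is an ansatz in which flatness holds for an entire free function, which can then break this symmetry. The paper takes $F(s,\theta)=\bigl(\rho(\theta)\gamma(s),z(\theta)\bigr)$ with $\rho=2+\cos\theta$, $z=\sin\theta$, and $\gamma$ an embedded unit-speed curve in $\S^2$. Since $F_{s\theta}$ is tangent and the coordinates are orthogonal, $A(e_1,e_2)=0$, so the normal bundle is flat for every $\gamma$. The frame $(\Nsf,0)$, $(-z'\gamma,\rho')$ is parallel, and $\omega(e_1)=-\rho'/\rho$. The formula above then reduces to $\Ccal(\nu,\xi)=\kappa'\rho'(\rho-z')\rho^{-4}\begin{pmatrix}0&1\\-1&0\end{pmatrix}$. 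This is nonzero at $(s_0,\pi/2)$ whenever the geodesic curvature of $\gamma$ has $\kappa'(s_0)\neq0$.
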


We next give a codimension two entire graphical example in $\R^4$ whose normal bundle is initially flat. The example has bounded slope and bounded geometry. Moreover, it is uniformly area decreasing, so that the result of Savas-Halilaj and Smoczyk \cite{SavasHalilajSmoczyk24} yields a smooth graphical mean curvature flow that exists uniquely for all time. Nevertheless, $\Ccal$ is nonzero at some point of the initial graph, and the flatness of the normal bundle is lost for all sufficiently small positive times.

\begin{theorem}\label{thm:graph}
There exists a codimension two entire graph in $\R^4$ with flat normal bundle, bounded slope, bounded geometry, and the uniformly area decreasing property. The corresponding smooth graphical mean curvature flow exists uniquely for all time and preserves all of these properties except the flatness of the normal bundle. More precisely, the normal bundle is not flat for all sufficiently small positive times.
\end{theorem}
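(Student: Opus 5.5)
We will combine three ingredients: a tensorial criterion for loss of flatness, an explicit initial graph satisfying it, and the long-time existence theory of Savas-Halilaj and Smoczyk.

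\emph{Step 1: the criterion.} Along the flow, write the normal curvature as $R^\perp$. We compute its evolution in arbitrary codimension in the form
\[
\Big(\frac{\partial}{\partial t}-\Delta\Big) R^\perp = \mathcal{L}(R^\perp) + \Ccal .
\]
Here $\mathcal{L}(R^\perp)$ collects the terms that are linear in $R^\perp$ (or in $\nabla R^\perp$), with coefficients built from $A$. The source term $\Ccal$ is the commutator $\Ccal_{ij}^{\alpha\beta}=2\sum_k\bigl([\nabla_k A^\alpha,\nabla_k A^\beta]\bigr)_{ij}$, up to normalization. If $R^\perp\equiv0$ at $t=0$, then all spatial derivatives of $R^\perp$ also vanish at $t=0$. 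Hence at any point $p_0$,
\[
\partial_t R^\perp\big|_{t=0}(p_0)=\Ccal(p_0).
\]
So if $\Ccal(p_0)\neq0$, then $R^\perp(p_0,t)=t\,\Ccal(p_0)+o(t)\neq0$ for all small $t>0$. This requires the flow to be smooth up to $t=0$ near $p_0$. Bounded geometry of the initial data gives this locally, via interior estimates for graphical flows or the Ecker--Huisken type local estimates.

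\emph{Step 2: the example.} We look for a graph $F(x,y)=(x,y,f(x,y),g(x,y))$ with flat normal bundle. In codimension two over a surface, flatness means the two shape operators commute. A natural ansatz is a product-type or separated-variables graph, e.g.
\[
f=\varepsilon\,\phi(x),\qquad g=\varepsilon\,\psi(x)\,\eta(y).
\]
We then check flatness via the Ricci equation, i.e.\ $[A^{\nu_1},A^{\nu_2}]=0$ in an orthonormal normal frame.

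A more robust choice is a graph lying in a product of a curve with a line. Such examples tend to have $\Ccal=0$, however, so we need something genuinely two-dimensional. We will try $f=a\sin x$, $g=b\sin y$, a product of two planar curves. Its tangent planes split orthogonally, the normal connection is flat, and the shape operators are simultaneously diagonal in the $x,y$ directions. But then the $\nabla A$ are diagonal too, and $\Ccal$ vanishes, so this fails.

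We therefore perturb. We take $f=a\sin x+c\,h(x,y)$ and impose the commutation condition as a PDE constraint. Alternatively, we take a graph whose image lies in a hypersurface such as $\R^3\times\R$ rotated; any surface in a totally geodesic $\R^3$ has flat normal bundle, but again $\Ccal=0$. The cleanest route is probably a surface contained in a round hypersphere $S^3$ or a hyperplane-tilted family. A surface in $S^3(r)$ whose shape operator in $S^3$ is parallel to the position normal is umbilic... For the graph we instead take a surface lying in a cylinder $\Gamma\times\R^2$, where $\Gamma$ is a periodic curve. This keeps bounded slope, and the flatness and nonvanishing of $\Ccal$ are checked by direct computation at one point, e.g.\ the origin, using symmetry to kill most terms.

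Finding an explicit flat-normal-bundle surface with $\Ccal\neq0$ is the main obstacle. We would first search among graphs in which one normal direction has a shape operator that is a multiple of the identity. Any symmetric operator commutes with it, so flatness is automatic. Meanwhile $\nabla$ of that operator contributes $(\nabla_k\lambda)\,\mathrm{Id}$, which still commutes. So instead we require the shape operators to share an eigenframe that rotates along the surface; then $\nabla A^\alpha$ picks up off-diagonal terms proportional to the frame rotation times eigenvalue gaps. The commutator of these terms is nonzero when both gaps are nonzero and the rotations differ.

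\emph{Step 3: the global properties.} For sufficiently small amplitude $\varepsilon$ and periodic (hence bounded) data, the graph has bounded slope and bounded geometry. The singular values of $d(f,g)$ have $\lambda_1\lambda_2\le1-\delta$, so it is uniformly area decreasing. Savas-Halilaj--Smoczyk then give unique all-time smooth graphical flow preserving the slope, geometry and area-decreasing bounds. Combined with Step 1 at a point where $\Ccal\neq0$, this yields Theorem~\ref{thm:graph}.
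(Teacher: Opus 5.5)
Your Steps 1 and 3 agree with the paper. Step 1 is Theorem~\ref{thm:nonpreservation_criterion}; your $\Ccal$ differs from the paper's only by the factor $-2$. The short-time and long-time statements come from \cite[Proposition~2.1 and Theorem~5.3]{SavasHalilajSmoczyk24}.

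\textbf{The genuine gap is Step 2.} The theorem is an existence statement. Its content is an explicit entire graph with flat normal bundle, bounded slope, bounded geometry, uniform area decrease, and $\Ccal\neq0$ at some point, and you never produce one. You list several ansatzes and correctly discard the product $f=a\sin x$, $g=b\sin y$. The others you leave unchecked, and then you call the construction ``the main obstacle''. Consequently Step 3 (periodicity, $\lambda_1\lambda_2\le1-\delta$, etc.) is asserted for an object that has not been constructed. Also, ``small amplitude'' is not automatically compatible with the nonlinear constraint $[S^\nu,S^\xi]=0$.

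The one concrete reduction you make discards precisely the class that works. If $S^\xi=\lambda\,\mathrm{Id}$, it is not true that $(\nabla_XS)^\xi=X(\lambda)\,\mathrm{Id}$. By definition $(\nabla_XS)^\xi=\nabla_XS^\xi-S^{\nabla^\perp_X\xi}$, and the last term survives unless $\xi$ is parallel. In codimension two, write $\nabla^\perp_X\nu=\omega(X)\xi$. Then $(\nabla_XS)^\nu=\nabla_XS^\nu-\omega(X)\lambda\,\mathrm{Id}$ and $(\nabla_XS)^\xi=X(\lambda)\,\mathrm{Id}+\omega(X)S^\nu$. Hence $\Ccal(\nu,\xi)=\sum_k\omega(e_k)\,[\nabla_{e_k}S^\nu,S^\nu]$, which is nonzero as soon as the normal frame twists and the eigenframe of $S^\nu$ is not parallel.

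Your other heuristic is also off. In a common eigenframe, the off-diagonal parts of $(\nabla_{e_k}S)^\nu$ and $(\nabla_{e_k}S)^\xi$ produced by the frame rotation are multiples of the same matrix, so they commute. There is only one rotation, and $\Ccal\neq0$ comes from its interaction with the diagonal parts, which arise from the variation of the eigenvalue gaps or from the normal twist.

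The paper's example is exactly of this umbilic-but-twisted type: $F(x,y)=(x,y,r(y)\cos x,r(y)\sin x)$.
\begin{itemize}
\item Flatness is achieved by the ODE $(1+r^2)r''+r(1+(r')^2)=0$. This makes $S^\xi=\frac{r}{E\sqrt G}\,\mathrm{Id}$, with $E=1+r^2$ and $G=1+(r')^2$, while $\nabla^\perp_{e_1}\nu=\frac{1}{E\sqrt G}\xi\neq0$.
\item The first integral $EG=K$ gives a nonconstant periodic $r$, constant slope $\sqrt K$, and bounded geometry.
\item It also gives $r^2(r')^2\le(\sqrt K-1)^2$, hence uniform area decrease for $1<K<4$.
\item Finally, $\Ccal(\nu,\xi)=\frac{4r(r')^3}{E^4G^2}\bigl(\begin{smallmatrix}0&1\\-1&0\end{smallmatrix}\bigr)$, which is nonzero wherever $rr'\neq0$.
\end{itemize}
Some such explicit construction is indispensable; without it your argument does not prove the theorem.
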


\subsection{The parallel principal normal condition}
Another simplifying condition that has been considered in higher codimension is the \emph{parallel principal normal} condition. When the mean curvature vector is nonzero, the \emph{principal normal} is the unit normal vector in its direction. It is said to be \emph{parallel} if its covariant derivative with respect to the normal connection vanishes. This condition is automatic for hypersurfaces, but restrictive in higher codimension. It nevertheless provides a convenient setting for the study of self-similar solutions. For hypersurfaces, Huisken obtained classification results for mean convex self-shrinkers in the compact and noncompact settings \cite{Huisken90,Huisken93}. Smoczyk \cite{Smoczyk05} used the parallel principal normal condition to extend these results to arbitrary codimension.

For codimension one self-shrinkers, Colding and Minicozzi \cite{ColdingMinicozzi12} removed the bounded curvature assumption from Huisken's noncompact classification. Using their method, Li and Wei \cite{LiWei14} showed that Smoczyk's classification in the complete noncompact case remains valid under weaker assumptions. Building on these results, Andrews, Li and Wei \cite{AndrewsLiWei14} obtained classification results for $F$-stable self-shrinkers with parallel principal normal.

In a different direction, using Smoczyk's method \cite{Smoczyk05}, the author \cite{Kunikawa17} extended a classification result of Mart{\'i}n, Savas-Halilaj and Smoczyk \cite{MartinSavasHalilajSmoczyk15} for translating hypersurfaces to translating solitons with parallel principal normal in arbitrary codimension. Thus the parallel principal normal condition is a useful simplifying assumption in the study of higher codimensional self-similar solutions. However, Smoczyk \cite[Remark~1.2~(iii)]{Smoczyk05} pointed out that there was no reason to expect this condition to be preserved under a general mean curvature flow.

We confirm this expectation by considering a non-isoparametric constant mean curvature (CMC) torus in $\S^3$ whose existence follows from the work of Perdomo \cite{Perdomo10}. Viewed as a codimension two submanifold of $\R^4$, its principal normal is initially parallel. By relating its Euclidean mean curvature flow to the mean curvature flow in $\S^3$, we show that the principal normal does not remain parallel for sufficiently small positive times. In codimension two, the parallel principal normal condition implies the flatness of the normal bundle, and hence the normal bundle of this example is initially flat. Although the principal normal ceases to be parallel, the normal bundle remains flat for all times for which the flow exists.

\begin{theorem}\label{thm:ppn}
There exists an embedded two-torus in $\R^4$ whose principal normal is initially parallel and whose normal bundle is initially flat, but the principal normal does not remain parallel under mean curvature flow for sufficiently small positive times. Nevertheless, the normal bundle remains flat for all times for which the flow exists.
\end{theorem}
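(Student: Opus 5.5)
The plan is to take a non-isoparametric CMC torus $\Sigma_0 \subset \S^3 \subset \R^4$, whose existence follows from Perdomo \cite{Perdomo10}, and to compare its Euclidean mean curvature flow with mean curvature flow inside $\S^3$.

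First I record the normal geometry of $\Sigma_0$ in $\R^4$. Write $\psf$ for the position vector and $\nu$ for the unit normal of $\Sigma_0$ in $\S^3$. Then the normal bundle in $\R^4$ is spanned by $\nu$ and $\psf$. The Euclidean mean curvature vector is
\[
\vec H = H_{\S^3}\,\nu - 2\psf ,
\]
and the shape operator in the direction $\psf$ is minus the identity. Since $\psf$ is parallel in the normal connection, and $\nu$ is then parallel as well because the normal bundle has rank two, the normal connection is flat. With $H_{\S^3}$ constant, the principal normal $\vec H/|\vec H|$ is a constant-coefficient combination of $\nu$ and $\psf$, so it is parallel. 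This gives the initial conditions.

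Next I handle the evolution by symmetry. A surface lying in the sphere of radius $r$ stays in a sphere under Euclidean mean curvature flow, up to a tangential reparametrization. Concretely, if $\Sigma_t \subset \S^3$ evolves by the spherical mean curvature flow $\partial_t = H_{\S^3}\nu$, then
\[
M_t = r(t)\,\Sigma_{\tau(t)}, \qquad r' = -\frac{2}{r}, \qquad \tau' = \frac{1}{r^2},
\]
solves the Euclidean flow. I would verify this by decomposing $\vec H$ of the scaled surface into its $\nu$ and $\psf$ components. Uniqueness of compact mean curvature flow then identifies $M_t$ with the Euclidean flow of $\Sigma_0$. Because $M_t$ lies in a round sphere, its normal bundle contains the parallel field $\psf/|\psf|$. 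So in codimension two the normal bundle stays flat for as long as the flow exists, which proves the last claim.

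For the failure of parallelism, I write
\[
\vec H(t) = r^{-1}H_{\S^3}(\tau)\,\nu - 2r^{-2}\psf .
\]
The principal normal is parallel exactly when the ratio of these two coefficients is constant on the surface, that is, when $H_{\S^3}(\tau)$ is constant on $\Sigma_\tau$. So it suffices to show that spherical mean curvature flow does not keep $H$ constant for small $\tau>0$. Along the flow,
\[
\partial_\tau H = \Delta H + \bigl(|A|^2 + 2\bigr) H .
\]
At $\tau=0$ we have $\Delta H = 0$, so $\partial_\tau H = (|A|^2+2)H_0$. Non-isoparametric means $|A|^2$ is not constant. So, provided $H_0 \neq 0$, $\partial_\tau H$ is nonconstant at $\tau=0$, and by continuity $H(\tau)$ is nonconstant for all small $\tau>0$.

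The main obstacle is the input on the initial surface. I need Perdomo's torus to be embedded, to have $H_0 \neq 0$, and to be genuinely non-isoparametric, so that $|A|^2$ is nonconstant. I would check these conditions from Perdomo's rotational construction, in which the principal curvatures vary along the profile curve. A second point to check is that $\vec H \neq 0$ throughout. This is automatic, since its $\psf$-component is nonzero, so the principal normal is defined for all times.
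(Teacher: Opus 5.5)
Your proposal is correct and follows the paper's argument essentially step for step: Perdomo's embedded non-isoparametric CMC torus in $\S^3$ with $h\neq 0$; the rescaling $F(p,t)=r(t)\xi(p,\tau(t))$ with $r'=-2/r$ and $\tau'=1/r^2$; flatness of the normal bundle for all times from the parallel radial unit normal; and the observation that the principal normal $\n=(h\nu-2\xi)/\sqrt{h^2+4}$ is parallel if and only if $h$ is spatially constant. The loss of parallelism then comes from $\partial_\tau h=\Delta h+(|B|^2+2)h$ together with the fact that $|B|^2$ is nonconstant for a non-isoparametric CMC surface. The differences are minor. The paper writes $\nabla^\perp_X\n=\frac{2X(h)}{(h^2+4)^{3/2}}(2\nu+h\xi)$ explicitly instead of using your coefficient-ratio argument. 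Your ``by continuity'' step should be stated as the first-order argument the paper gives: $h(p_1,\tau)-h(p_2,\tau)$ vanishes at $\tau=0$ and has nonzero derivative there. Your explicit appeal to uniqueness of compact mean curvature flow, to identify the rescaled flow with the Euclidean flow, is a point the paper leaves implicit.
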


The paper is organized as follows. In Section~\ref{sec:preliminaries}, we recall the basic notation and evolution equations for mean curvature flow in arbitrary codimension, revisit the evolution equation for the normal curvature, and derive a criterion for the loss of normal flatness in terms of the quantity $\Ccal$. In Sections~\ref{sec:cpt_example} and~\ref{sec:graph_example}, we apply this criterion to construct a compact counterexample and an entire graphical counterexample in $\R^4$ (Theorems~\ref{thm:cpt} and~\ref{thm:graph}). In Section~\ref{sec:ppn_example}, we study the parallel principal normal condition and give an example for which this condition is lost under mean curvature flow while the normal bundle remains flat (Theorem~\ref{thm:ppn}). 

\subsection*{Acknowledgements}
The author would like to thank Knut Smoczyk and Guofang Wang for helpful conversations concerning the preservation of the normal flatness under mean curvature flow. The author first learned from Knut Smoczyk, during a visit to Leibniz University Hannover in November 2016, that Smoczyk, G.~Wang and Xin had themselves discovered a gap in the preservation argument in \cite{SmoczykWangXin04}. Later, at the 8th China--Japan Geometry Conference held in Guilin in September 2023, Guofang Wang shared their earlier realization of the gap in \cite{SmoczykWangXin04} with the author and discussed the preservation problem. These conversations provided the initial motivation for the present work. The author is also grateful to Knut Smoczyk for clarifying the historical background concerning \cite{SmoczykWangXin04}. The author would also like to thank Yuanlong Xin and Naoyuki Koike for helpful comments on the manuscript. 

The author is supported by JSPS KAKENHI Grant Number 23K03105. 

\subsection*{AI Disclosure}
ChatGPT 5.6 was used in the development of this work. In particular, it was used to suggest suitable ansatzes and to explore examples satisfying the required geometric conditions. It was also used for editorial assistance in preparing the manuscript. All calculations and arguments were independently checked by the author, who takes full responsibility for the mathematical content of the article.

\section{Preliminaries and a non-preservation criterion}\label{sec:preliminaries}
\subsection{Basic notation}
Let $F:M^m\to\R^n$ be a smooth immersion. We denote by $\langle\cdot,\cdot\rangle$ the standard Euclidean inner product on $\R^n$ and by $g$ the induced metric on $M$. Let $D$ denote the standard Euclidean connection on $\R^n$. The Levi-Civita connection of $(M,g)$ and the normal connection of the normal bundle $T^\perp M$ are denoted by $\nabla$ and $\nabla^\perp$, respectively. We identify $TM$ with $dF(TM)\subset F^*T\R^n$ via $dF$.

For tangent vector fields $X,Y$ and a normal vector field $\nu$, the second fundamental form $A$ and the shape operator $S^\nu$ are defined respectively by
\begin{align*}
  A(X,Y):=(D_XY)^\perp, \qquad S^\nu(X):=-(D_X\nu)^\top,
\end{align*}
where $(\cdot)^\top$ and $(\cdot)^\perp$ denote the tangential and normal projections with respect to $F$. They are related by
\begin{align*}
  \langle A(X,Y),\nu\rangle=\langle S^\nu(X),Y\rangle.
\end{align*}
Then the Gauss and Weingarten formulas are
\begin{align*}
  D_XY=\nabla_XY+A(X,Y), \qquad D_X\nu=-S^\nu(X)+\nabla_X^\perp\nu.
\end{align*}

We regard the shape operator as the smooth bundle map
\begin{align*}
  S:T^\perp M\to\operatorname{Sym}_g(TM), \qquad \nu\mapsto S(\nu):=S^\nu, 
\end{align*}
where $\Sym_g(TM)$ denotes the bundle of self-adjoint endomorphisms of $TM$ with respect to $g$. Its covariant derivative is defined by
\begin{align*}
  (\nabla_XS)^\nu:=\nabla_XS^\nu-S^{\nabla_X^\perp\nu},
\end{align*}
where
\begin{align*}
  (\nabla_XS^\nu)(Y)=\nabla_X\bigl(S^\nu(Y)\bigr)-S^\nu(\nabla_XY).
\end{align*}

We use the convention
\begin{align*}
R^\perp(X,Y)\nu
:=
\nabla_X^\perp\nabla_Y^\perp\nu
-
\nabla_Y^\perp\nabla_X^\perp\nu
-
\nabla_{[X,Y]}^\perp\nu
\end{align*}
for the curvature of the normal connection.

\begin{definition}
We say that an immersion $F:M^m\to \R^n$ has \emph{flat normal bundle} if
\begin{align*}
R^\perp\equiv 0.
\end{align*}
\end{definition}

The Ricci equation in Euclidean space is
\begin{align*}
\langle R^\perp(X,Y)\nu,\xi\rangle
=
\langle [S^\nu,S^\xi](X),Y\rangle.
\end{align*}
Hence $F:M^m\to \R^n$ has flat normal bundle if and only if $[S^\nu,S^\xi]=0$ for all normal vectors $\nu,\xi$. Since the shape operators are self-adjoint with respect to $g$ and commute pairwise, they can be simultaneously diagonalized at each point whenever the normal bundle is flat. Note that in codimension one, the normal bundle is automatically flat. 

\subsection{Evolution equation for the shape operator}
In this subsection, we consider a smooth mean curvature flow
$F:M^m\times[0,T)\to\R^n$,
\begin{align*}
\frac{dF}{dt}(\cdot,t)=H(\cdot,t),
\end{align*}
where $H=\tr_g A$ denotes the mean curvature vector field of the immersion
$F_t=F(\cdot,t):M^m\to\R^n$. We again identify $TM$ with
$dF_t(TM)\subset F_t^*T\R^n$ via $dF_t$.

Let $X$, $Y$ and $\nu$ be time-dependent vector fields along $F_t$, with
$X, Y$ tangent and $\nu$ normal. We define
\begin{align*}
\nabla_tX:=\left(\frac{dX}{dt}\right)^\top,
\qquad
\nabla_t^\perp\nu:=\left(\frac{d\nu}{dt}\right)^\perp,
\end{align*}
and
\begin{align*}
(\nabla_t g)(X,Y)
:=
\frac{d}{dt}g(X,Y)
-g(\nabla_tX,Y)
-g(X,\nabla_tY).
\end{align*}
By definition,
\begin{align*}
\nabla_t g=0.
\end{align*}
Together with the Levi-Civita connection, $\nabla_t$ defines the spacetime tangent connection. Similarly, together with the normal connection, $\nabla_t^\perp$ defines the spacetime normal connection. These agree with the spacetime connections used in \cite[Section~2.3]{AndrewsBaker10}. They are compatible with the induced metric $g$ and the normal metric, respectively.

For the second fundamental form $A$, we define
\begin{align*}
  (\nabla_tA)(X,Y):=\nabla_t^\perp\bigl(A(X,Y)\bigr)-A(\nabla_tX,Y)-A(X,\nabla_tY).
\end{align*}
Then this can be computed as
\begin{align}\label{eq:evol_eq_A}
\nabla_tA=\Delta A+\Pcal,
\end{align}
where $\Pcal$ is a cubic polynomial in $A$. Writing $A_{ij}=A(e_i,e_j)$ and $\Pcal_{ij}=\Pcal(e_i,e_j)$ with a local orthonormal tangent frame $\{e_i\}$, we have
\begin{align*}
\Pcal_{ij}
=
\sum_{k,l}
\bigl\{
\langle A_{ij},A_{kl}\rangle A_{kl}
+\langle A_{il},A_{kl}\rangle A_{kj}
+\langle A_{jl},A_{kl}\rangle A_{ki}
-2\langle A_{ik},A_{jl}\rangle A_{kl}
\bigr\}.
\end{align*}
See \cite[Section~3]{AndrewsBaker10} for the derivation of \eqref{eq:evol_eq_A}. Similarly, for the shape operator bundle map $S$, we define 
\begin{align*}
  (\nabla_tS)^\nu(X):=\nabla_t\bigl(S^\nu(X)\bigr)-S^\nu(\nabla_tX)-S^{\nabla_t^\perp\nu}(X). 
\end{align*}
Using \eqref{eq:evol_eq_A} together with the relations
\begin{align*}
\langle (\nabla_tA)(X,Y),\nu\rangle
&=
\langle (\nabla_tS)^\nu(X),Y\rangle,\\
\langle (\Delta A)(X,Y),\nu\rangle
&=
\langle (\Delta S)^\nu(X),Y\rangle,
\end{align*}
we obtain
\begin{align}\label{eq:evol_eq_S}
(\nabla_tS)^\nu
=
(\Delta S)^\nu+\Qcal^\nu,
\end{align}
where $\Qcal^\nu$ is the self-adjoint endomorphism corresponding to $\Pcal$, defined by
\begin{align*}
\langle \Qcal^\nu(X),Y\rangle
=
\langle \Pcal(X,Y),\nu\rangle.
\end{align*}
With respect to a local orthonormal normal frame $\{\nu_\beta\}$, this is given by
\begin{align}\label{eq:Qcal_formula}
  \Qcal^\nu&=\sum_\beta\left\{
  \tr_g(S^\nu S^{\nu_\beta})S^{\nu_\beta} 
  +S^\nu(S^{\nu_\beta})^2+(S^{\nu_\beta})^2S^\nu -2S^{\nu_\beta}S^\nu S^{\nu_\beta} 
  \right\}\\
  &=\sum_\beta \left\{\tr_g(S^\nu S^{\nu_\beta})S^{\nu_\beta} +[S^{\nu_\beta}, [S^{\nu_\beta}, S^\nu]]\right\}. \notag 
\end{align}

\subsection{Evolution equations for the normal curvature}
In this subsection, we derive the evolution equation for the normal curvature tensor
$R^\perp$ under mean curvature flow. Since the following computations are tensorial,
we may compute at a point $(p,t_0)\in M\times[0,T)$ using local orthonormal tangent
and normal frames $\{e_i\}$ and $\{\nu_\alpha\}$. By parallel transport with respect to the metric connections described above, these frames may be chosen so that
\begin{align*}
\nabla e_i=0,
\qquad
\nabla_t e_i=0,
\qquad
\nabla^\perp\nu_\alpha=0,
\qquad
\nabla_t^\perp\nu_\alpha=0
\end{align*}
at $(p,t_0)$.

Differentiating the Ricci equation in time, we obtain
\begin{align*}
  \langle (\nabla_tR^\perp)(e_i,e_j)\nu_\alpha,\nu_\beta\rangle
  ={}&
  \big\langle [(\nabla_tS)^{\nu_\alpha},S^{\nu_\beta}](e_i),e_j\big\rangle
  +\big\langle [S^{\nu_\alpha},(\nabla_tS)^{\nu_\beta}](e_i),e_j\big\rangle\\[3pt]
  ={}&
  \big\langle [(\Delta S)^{\nu_\alpha},S^{\nu_\beta}](e_i),e_j\big\rangle
  +\big\langle [\Qcal^{\nu_\alpha},S^{\nu_\beta}](e_i),e_j\big\rangle\\
  &+\big\langle [S^{\nu_\alpha},(\Delta S)^{\nu_\beta}](e_i),e_j\big\rangle
  +\big\langle [S^{\nu_\alpha},\Qcal^{\nu_\beta}](e_i),e_j\big\rangle,
\end{align*}
where \eqref{eq:evol_eq_S} is used in the second equality. On the other hand, applying the rough Laplacian to the Ricci equation and, when necessary, using the orthonormal frames chosen above, we obtain
\begin{align*}
  \langle (\Delta R^\perp)(e_i,e_j)\nu_\alpha,\nu_\beta\rangle
  ={}&
  \big\langle [(\Delta S)^{\nu_\alpha},S^{\nu_\beta}](e_i),e_j\big\rangle
  +\big\langle [S^{\nu_\alpha},(\Delta S)^{\nu_\beta}](e_i),e_j\big\rangle\\
  &+2\sum_k
  \big\langle [(\nabla_{e_k}S)^{\nu_\alpha},
  (\nabla_{e_k}S)^{\nu_\beta}](e_i),e_j\big\rangle.
\end{align*}
For normal vector fields $\nu,\xi$, we define
\begin{align}\label{eq:Ccal_definition}
\Ccal(\nu,\xi)
:=
\sum_k
\big[
(\nabla_{e_k}S)^\nu,
(\nabla_{e_k}S)^\xi
\big]
=
\tr_g
\big[
(\nabla_\bullet S)^\nu,
(\nabla_\bullet S)^\xi
\big], 
\end{align}
which is independent of the choice of $\{e_k\}$. Therefore, we obtain\footnote{The term involving $\Ccal$ corresponds to the gradient term appearing in equation~(10) of Baker and Nguyen \cite{BakerNguyen17}.}
\begin{align}\label{eq:Rperp_Ccal}
  \big\langle (\nabla_tR^\perp)(e_i,e_j)\nu_\alpha,\nu_\beta\big\rangle
  ={}&
  \big\langle (\Delta R^\perp)(e_i,e_j)\nu_\alpha,\nu_\beta\big\rangle+\big\langle [\Qcal^{\nu_\alpha},S^{\nu_\beta}](e_i),e_j\big\rangle\\
  &+\big\langle [S^{\nu_\alpha},\Qcal^{\nu_\beta}](e_i),e_j\big\rangle-2\langle \Ccal(\nu_\alpha,\nu_\beta)(e_i),e_j\rangle. 
  \notag
\end{align}
Moreover, using \eqref{eq:Qcal_formula}, we have
\begin{align*}
  [\Qcal^{\nu_\alpha},S^{\nu_\beta}]
  =
  \sum_\gamma\left\{
  \tr_g(S^{\nu_\alpha}S^{\nu_\gamma})[S^{\nu_\gamma},S^{\nu_\beta}]
  +
  [[S^{\nu_\gamma},[S^{\nu_\gamma},S^{\nu_\alpha}]],S^{\nu_\beta}]
  \right\}.
\end{align*}
By the Ricci equation, this can be schematically written as
\begin{align}\label{eq:Qcal_commutator}
  [\Qcal^{\nu_\alpha},S^{\nu_\beta}]
  =
  A\ast A\ast R^\perp,
\end{align}
where we use Hamilton's $\ast$-notation. The same argument with $\nu_\alpha$ and $\nu_\beta$ interchanged applies to $[S^{\nu_\alpha},\Qcal^{\nu_\beta}]=A\ast A\ast R^\perp$. On the other hand, we have
\begin{align*}
\Ccal(\nu_\alpha,\nu_\beta)=\nabla A\ast\nabla A.
\end{align*}
Importantly, no factor involving $R^\perp$ appears in $\Ccal$. Hence, the evolution equation for $R^\perp$ can be written as
\begin{align*}
  \nabla_tR^\perp=\Delta R^\perp+ A\ast A\ast R^\perp+\nabla A\ast\nabla A.
\end{align*}
As a consequence, we have 
\begin{align}\label{eq:evol_eq_norm_Rperp}
  \left(\frac{d}{dt}-\Delta\right)|R^\perp|^2
  =
  -2|\nabla R^\perp|^2
  +A\ast A\ast R^\perp\ast R^\perp
  +\nabla A\ast\nabla A\ast R^\perp.
\end{align}
The last term in \eqref{eq:evol_eq_norm_Rperp} is absent from the evolution equation for $|R^\perp|^2$ derived in \cite{SmoczykWangXin04}. In that paper, the preservation of the flatness of the normal bundle was intended to follow from an Ecker--Huisken type maximum principle \cite{EckerHuisken89}. Even assuming that $A$ and $\nabla A$ are uniformly bounded along the flow, however, if we set
\begin{align*}
  u:=|R^\perp|^2,
\end{align*}
the corrected evolution equation \eqref{eq:evol_eq_norm_Rperp} yields only an inequality of the form
\begin{align*}
  \left(\frac{d}{dt}-\Delta\right)u
  \leq C_1u+C_2\sqrt{u}.
\end{align*}
More recently, Savas-Halilaj and Smoczyk \cite[Proposition~1.3]{SavasHalilajSmoczyk24} established a more flexible maximum principle allowing inequalities of the form
\begin{align*}
  \left(\frac{d}{dt}-\Delta\right)u
  \leq \langle \mathbf{a},\nabla u\rangle+\Phi(u),
\end{align*}
where $\mathbf{a}$ is a uniformly bounded tangent vector field and $\Phi$ is uniformly Lipschitz continuous. In our case, one may take $\mathbf{a}=0$. However, the reaction term
\begin{align*}
  \Phi(u)=C_1u+C_2\sqrt{u}
\end{align*}
is not Lipschitz at $u=0$, and hence this maximum principle still does not imply preservation of the condition $R^\perp=0$.

\subsection{A criterion for loss of normal flatness}
We now give a simple criterion for the flatness of the normal bundle to fail to be preserved under mean curvature flow. Since $\nabla S$ is tensorial, the quantity $\Ccal$ defined in \eqref{eq:Ccal_definition} is pointwise. Thus, at a fixed point, $\{e_k\}$ may be any orthonormal basis of the tangent space, and $\nu,\xi$ may be any normal vectors:  
\begin{align*}
\Ccal(\nu,\xi)
:=
\sum_k
\big[
(\nabla_{e_k}S)^\nu,
(\nabla_{e_k}S)^\xi
\big]
=
\tr_g
\big[
(\nabla_\bullet S)^\nu,
(\nabla_\bullet S)^\xi
\big]. 
\end{align*}

\begin{theorem}\label{thm:nonpreservation_criterion}
Let $F:M^m\times[0,T)\to\R^n$ be a smooth mean curvature flow, and suppose that the normal bundle of $F_0=F(\cdot,0)$ is flat. If there exist a point $p\in M$ and normal vectors $\nu,\xi$ to $F_0$ at $p$ such that
\begin{align*}
  \Ccal(\nu,\xi)\neq0,
\end{align*}
then the flatness of the normal bundle is not preserved for all sufficiently small $t>0$.
\end{theorem}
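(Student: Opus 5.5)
The plan is a first-order Taylor expansion in time at the point $p$, using the evolution equation \eqref{eq:Rperp_Ccal}. Suppose, for contradiction, that there is a sequence $t_j\downarrow 0$ such that $R^\perp(\cdot,t_j)\equiv 0$ on $M$. Equivalently, suppose the set of times at which the normal bundle is flat accumulates at $0$. In fact we will show directly that $R^\perp(p,t)\neq 0$ for all small $t>0$, which is stronger.

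First fix the tensors along the flow at $p$. Choose an orthonormal tangent frame $\{e_i(t)\}$ and a normal frame $\{\nu_\alpha(t)\}$ at $p$ that are $\nabla_t$- and $\nabla_t^\perp$-parallel in time, starting from a basis at $t=0$. This is possible by metric compatibility of the spacetime connections. The frames stay orthonormal and depend smoothly on $t$. Consider the smooth scalar function
\begin{align*}
  f(t):=\big\langle R^\perp(e_i,e_j)\nu_\alpha,\nu_\beta\big\rangle(p,t).
\end{align*}
By construction $f'(t)=\langle(\nabla_tR^\perp)(e_i,e_j)\nu_\alpha,\nu_\beta\rangle(p,t)$.

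Next, evaluate $f'(0)$ using \eqref{eq:Rperp_Ccal}. At $t=0$ the normal bundle is flat on all of $M$, so $R^\perp(\cdot,0)\equiv0$. Hence $\Delta R^\perp=0$ at $(p,0)$, since it is a spatial derivative of the zero tensor. The terms $[\Qcal^{\nu_\alpha},S^{\nu_\beta}]$ and $[S^{\nu_\alpha},\Qcal^{\nu_\beta}]$ vanish by \eqref{eq:Qcal_commutator}, as they are of the form $A\ast A\ast R^\perp$. This leaves
\begin{align*}
  f'(0)=-2\langle\Ccal(\nu_\alpha,\nu_\beta)(e_i),e_j\rangle(p,0).
\end{align*}
Since $\Ccal$ is bilinear in $(\nu,\xi)$ and $\Ccal(\nu,\xi)\neq0$ at $p$, some pair $(\nu_\alpha,\nu_\beta)$ and some indices $i,j$ make this nonzero. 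For that pair, take the corresponding component $f$, so $f(0)=0$ and $f'(0)\neq0$. By Taylor's theorem, $f(t)=f'(0)t+O(t^2)$, so $f(t)\neq0$ for all $0<t<\varepsilon$. Thus $R^\perp(p,t)\neq0$ for every sufficiently small positive time, which is the claim.

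The main obstacle is justifying that the reduction at $t=0$ is legitimate. Specifically, $\Delta R^\perp(p,0)=0$ requires flatness on a neighborhood of $p$, not just at $p$; this holds because the hypothesis is global flatness of $F_0$. One also needs the smoothness of $F$ up to $t=0$ so that $f\in C^1([0,\varepsilon))$ and \eqref{eq:Rperp_Ccal} holds at $t=0$; the statement assumes a smooth flow on $M\times[0,T)$. A minor point is checking that $\Ccal$ being a nonzero endomorphism indeed yields a nonzero component with respect to an orthonormal basis, which is immediate.
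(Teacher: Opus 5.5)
Your proof is correct and is essentially the paper's argument. At $(p,0)$ the $\Delta R^\perp$ and $\Qcal$-commutator terms in the evolution equation vanish because $F_0$ is flat everywhere, so some component of $R^\perp$ at $p$ is zero at $t=0$ but has time derivative $-2\langle\Ccal(\cdot,\cdot)(\cdot),\cdot\rangle\neq0$. The only cosmetic difference is that you use time-parallel frames together with bilinearity of $\Ccal$, while the paper extends $X,Y,\nu,\xi$ arbitrarily in time and notes that the extra terms vanish at $t=0$ since $R^\perp(\cdot,0)\equiv 0$.
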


\begin{proof}
Fix the point $p\in M$ and the normal vectors $\nu,\xi$ given in the assumption. Since $\Ccal(\nu,\xi)\neq 0$, there exist $X,Y\in T_pM$ such that
\begin{align*}
\big\langle
\Ccal(\nu,\xi)(X),Y
\big\rangle
\neq 0.
\end{align*}
By assumption, $R^\perp(\cdot, 0)\equiv 0$. Hence
\begin{align*}
\Delta R^\perp(\cdot,0)\equiv 0.
\end{align*}
Moreover, by \eqref{eq:Qcal_commutator},
\begin{align*}
[\Qcal^\nu,S^\xi]
=
[S^\nu,\Qcal^\xi]
=
0
\end{align*}
along $F_0$. Therefore, evaluating \eqref{eq:Rperp_Ccal} at $(p,0)$, we obtain
\begin{align}\label{eq:Rperp_time_derivative_nonzero}
\big\langle
(\nabla_tR^\perp)(X,Y)\nu,\xi
\big\rangle
&=
-2
\big\langle
\Ccal(\nu,\xi)(X),Y
\big\rangle
\neq 0.
\end{align}
Extend $X,Y,\nu,\xi$ smoothly in the time direction at $p$, and define
\begin{align*}
\psi(t)
:=
\big\langle
R^\perp(X,Y)\nu,\xi
\big\rangle(p,t).
\end{align*}
As $R^\perp(\cdot,0)\equiv 0$, all terms arising from the time derivatives of
$X,Y,\nu,\xi$ vanish when we differentiate $\psi$ at $t=0$. Hence, by
\eqref{eq:Rperp_time_derivative_nonzero},
\begin{align*}
\frac{d\psi}{dt}(0)
&=
\big\langle
(\nabla_tR^\perp)(X,Y)\nu,\xi
\big\rangle(p,0)
\neq 0.
\end{align*}
Since $\psi(0)=0$, it follows that $\psi(t)\neq 0$ for all sufficiently small $t>0$. Therefore,
\begin{align*}
R^\perp(p,t)\neq 0
\end{align*}
for all sufficiently small $t>0$, and the flatness of the normal bundle is not preserved.
\end{proof}

\section{A compact counterexample}\label{sec:cpt_example}
In this section, we construct an explicit compact surface of codimension two in $\R^4$ with flat normal bundle. By confirming the criterion in Theorem~\ref{thm:nonpreservation_criterion}, we show that the flatness of the normal bundle is not preserved under mean curvature flow.

\subsection{The ansatz for the initial torus with flat normal bundle}
Fix $L>0$, and consider two circles in $\C$:
\begin{align*}
  \S^1_L
  &:=
  \{e^{2\pi i s/L}\in \C\mid s\in\R\},
  \qquad
  \S^1_{2\pi}
  :=
  \{e^{i\theta}\in \C\mid \theta\in\R\}.
\end{align*}
Take a smooth unit speed closed curve
\begin{align*}
  \gamma:\S^1_L\to\S^2,
\end{align*}
where $\S^2\subset\R^3$ is the standard unit sphere. Consider also a smooth closed curve
\begin{align*}
  (\rho,z):\S^1_{2\pi}\to\R^2
\end{align*}
satisfying the profile conditions
\begin{align}\label{eq:profile_conditions}
  \rho>0,\qquad
  (\rho')^2+(z')^2=1,\qquad
  \rho'z''-z'\rho''=1.
\end{align}
Differentiating the second condition in \eqref{eq:profile_conditions}, we obtain
\begin{align}\label{eq:profile_orthogonality}
  \rho'\rho''+z'z''=0.
\end{align}
Combining \eqref{eq:profile_orthogonality} with the third condition in \eqref{eq:profile_conditions}, we further obtain
\begin{align}\label{eq:profile_derivatives}
  \rho''=-z',\qquad z''=\rho'.
\end{align}

Define a smooth map from the two-torus
\begin{align*}
  F:\S^1_L\times\S^1_{2\pi}\to\R^3\times\R=\R^4
\end{align*}
by
\begin{align}\label{eq:ansatz_cpt}
  F\left(e^{2\pi i s/L},e^{i\theta}\right)
  =
  \bigl(\rho(e^{i\theta})\gamma(e^{2\pi i s/L}),z(e^{i\theta})\bigr).
\end{align}
We use $(s,\theta)$ as local coordinates on $\S^1_L\times\S^1_{2\pi}$ and simply write
\begin{align*}
F(s,\theta)=(\rho(\theta)\gamma(s),z(\theta)).
\end{align*}
Note that $s$ is the arc-length parameter of $\gamma$. Then the tangent vectors are
\begin{align*}
F_s=(\rho \Tsf,0),\qquad
F_\theta=(\rho'\gamma,z'),
\end{align*}
where we set $\Tsf:=\gamma'$. By the second condition in \eqref{eq:profile_conditions}, the induced metric is
\begin{align*}
g=\rho^2\,ds^2+d\theta^2.
\end{align*}
Since $\rho>0$, it follows that $F$ is an immersion and
\begin{align*}
e_1:=\frac{1}{\rho}F_s=(\Tsf,0),
\qquad
e_2:=F_\theta
\end{align*}
form a local orthonormal tangent frame.

Now we set
\begin{align*}
\Nsf:=\gamma\times\Tsf.
\end{align*}
Then $\{\Tsf,\Nsf,\gamma\}$ is the positively oriented orthonormal frame of $\R^3$ along $\gamma$ (the so-called \emph{Darboux frame}). If $\kappa$ denotes the geodesic curvature of $\gamma$, the Darboux formulas can be written as
\begin{align}\label{eq:Darboux_formulas}
\Tsf'&=\kappa\Nsf-\gamma,\qquad
\Nsf'=-\kappa\Tsf,\qquad
\gamma'=\Tsf.
\end{align}

An orthonormal normal frame along $F$ is given by
\begin{align*}
\nu:=(\Nsf,0),
\qquad
\xi:=(-z'\gamma,\rho').
\end{align*}
Here the second condition in \eqref{eq:profile_conditions} is used to see that $\xi$ is a unit vector. Using \eqref{eq:Darboux_formulas}, we have
\begin{align}\label{eq:compact_second_derivatives}
F_{ss}
&=
(\rho\Tsf',0)
=
\rho(\kappa\Nsf-\gamma,0),\notag\\[3pt]
F_{s\theta}
&=
F_{\theta s}
=
(\rho'\Tsf,0),\\[3pt]
F_{\theta\theta}
&=
(\rho''\gamma,z'').\notag
\end{align}
Therefore,
\begin{align*}
A(e_1,e_1)
&=(D_{e_1}e_1)^\perp
=\frac{1}{\rho^2}(F_{ss})^\perp\\[3pt]
&=\frac{\langle F_{ss},\nu\rangle}{\rho^2}\nu
+\frac{\langle F_{ss},\xi\rangle}{\rho^2}\xi\\[3pt]
&=\frac{\kappa}{\rho}\nu+\frac{z'}{\rho}\xi.
\end{align*}
A similar computation gives
\begin{align*}
  A(e_1,e_2)=A(e_2,e_1)=0,\qquad
  A(e_2,e_2)=(\rho'z''-z'\rho'')\xi=\xi,
\end{align*}
where the third condition in \eqref{eq:profile_conditions} is used in the last equality. Thus, with respect to $\{e_1,e_2\}$, the shape operators are represented by 
\begin{align*}
  S^\nu
  &=
  \begin{pmatrix}
    \kappa/\rho & 0\\[6pt]
    0 & 0
  \end{pmatrix},
  \qquad
  S^\xi
  =
  \begin{pmatrix}
    z'/\rho & 0\\[6pt]
    0 & 1
  \end{pmatrix}.
\end{align*}
It follows that $S^\nu$ and $S^\xi$ are simultaneously diagonalized with respect to $\{e_1,e_2\}$. In particular, $[S^\nu,S^\xi]=0$, and hence, by the Ricci equation, the normal bundle of $F$ is flat.

\subsection{Verification of the non-preservation criterion}
We now assume in addition that $\gamma:\S^1_L\to\S^2$ has nonconstant geodesic curvature $\kappa$. Then there exists a point $s_0 \in \R$ such that
\begin{align*}
  \kappa'(s_0)\neq0.
\end{align*}

In order to compute the quantity $\Ccal$ in the criterion of Theorem~\ref{thm:nonpreservation_criterion}, we first compute the tangent connection. Using \eqref{eq:compact_second_derivatives}, we have 
\begin{align*}
\nabla_{e_1}e_1
&=
-\frac{\rho'}{\rho}e_2,
&
\nabla_{e_1}e_2
&=
\frac{\rho'}{\rho}e_1,\\[6pt]
\nabla_{e_2}e_1
&=
0,
&
\nabla_{e_2}e_2
&=
0,
\end{align*}
where \eqref{eq:profile_orthogonality} is used in the computation of $\nabla_{e_2}e_2$. Moreover, using \eqref{eq:Darboux_formulas} and
\eqref{eq:profile_orthogonality}, we obtain
\begin{align*}
  \nabla^\perp\nu
  =
  \nabla^\perp\xi
  =
  0
\end{align*}
for our previous choice of the normal frame $\{\nu,\xi\}$. Hence,
\begin{align*}
(\nabla_XS)^\nu=\nabla_XS^\nu,
\qquad
(\nabla_XS)^\xi=\nabla_XS^\xi,
\end{align*}
for any tangent vector field $X$. Taking this into account, by differentiating
\begin{align*}
  S^\nu(e_1)=\frac{\kappa}{\rho}e_1,
\end{align*}
we have
\begin{align*}
  (\nabla_{e_1}S)^\nu(e_1)
  &=\nabla_{e_1}(S^\nu(e_1))-S^\nu(\nabla_{e_1}e_1)\\[3pt]
  &=\nabla_{e_1}\left(\frac{\kappa}{\rho}e_1\right)
  +\frac{\rho'}{\rho}S^\nu(e_2)\\[3pt]
  &=e_1\left(\frac{\kappa}{\rho}\right)e_1+\frac{\kappa}{\rho}\nabla_{e_1}e_1\\[3pt]
  &=\frac{\kappa'}{\rho^2}e_1-\frac{\kappa\rho'}{\rho^2}e_2.
\end{align*}
By a similar computation, we obtain
\begin{align*}
  (\nabla_{e_1}S)^\nu
  &=
  \frac{1}{\rho^2}
  \begin{pmatrix}
    \kappa' & -\kappa\rho'\\[6pt]
    -\kappa\rho' & 0
  \end{pmatrix},
  \qquad
  (\nabla_{e_2}S)^\nu
  =
  \frac{1}{\rho^2}
  \begin{pmatrix}
    -\kappa\rho' & 0\\[6pt]
    0 & 0
  \end{pmatrix}.
\end{align*}
For the $\xi$ direction, using \eqref{eq:profile_derivatives}, we likewise obtain
\begin{align*}
  (\nabla_{e_1}S)^\xi
  &=
  \frac{1}{\rho^2}
  \begin{pmatrix}
    0 & \rho'(\rho-z')\\[6pt]
    \rho'(\rho-z') & 0
  \end{pmatrix},
  \qquad
  (\nabla_{e_2}S)^\xi
  =
  \frac{1}{\rho^2}
  \begin{pmatrix}
    \rho'(\rho-z') & 0\\[6pt]
    0 & 0
  \end{pmatrix}.
\end{align*}
Therefore,
\begin{align*}
  \Ccal(\nu,\xi)=\sum_{i=1}^2
  \big[(\nabla_{e_i}S)^\nu,(\nabla_{e_i}S)^\xi\big]
  =
  \big[(\nabla_{e_1}S)^\nu,(\nabla_{e_1}S)^\xi\big],
\end{align*}
since $(\nabla_{e_2}S)^\nu$ and $(\nabla_{e_2}S)^\xi$ are both diagonal. Hence,
\begin{align}\label{eq:Ccal_cpt}
  \Ccal(\nu,\xi)
  =
  \frac{\kappa'\rho'(\rho-z')}{\rho^4}
  \begin{pmatrix}
    0 & 1\\
    -1 & 0
  \end{pmatrix}.
\end{align}

We now choose the profile curve explicitly by
\begin{align*}
  \rho(\theta)=2+\cos\theta,\qquad
  z(\theta)=\sin\theta.
\end{align*}
This choice satisfies all the profile conditions \eqref{eq:profile_conditions}. Moreover,
\begin{align*}
  \rho'(\rho-z')
  =
  -2\sin\theta.
\end{align*}
Thus \eqref{eq:Ccal_cpt} becomes
\begin{align*}
  \Ccal(\nu,\xi)
  =
  \frac{2\kappa'\sin\theta}{\rho^4}
  \begin{pmatrix}
    0 & -1\\
    1 & 0
  \end{pmatrix}.
\end{align*}
In particular, we have 
\begin{align*}
  \Ccal(\nu,\xi)\big|_{(s_0,\pi/2)}
  =
  \frac{\kappa'(s_0)}{8}
  \begin{pmatrix}
    0 & -1\\
    1 & 0
  \end{pmatrix}
  \neq0.
\end{align*}
Therefore, Theorem~\ref{thm:nonpreservation_criterion} shows that the flatness of the normal bundle is not preserved under the mean curvature flow for all sufficiently small positive times.

\subsection{Embeddedness and fullness of the torus}
For the explicit choice
\begin{align*}
  \rho(\theta)=2+\cos\theta,\qquad
  z(\theta)=\sin\theta,
\end{align*}
the profile curve $(\rho,z)$ is an embedded circle. Thus, if $\gamma:\S^1_L\to\S^2$ is also embedded, then the ansatz~\eqref{eq:ansatz_cpt} defines an embedded torus in $\R^4$.

Moreover, the resulting torus is full in $\R^4$ because the geodesic curvature of $\gamma$ is nonconstant. To see this, suppose, to the contrary, that the image of $F$ is contained in an affine hyperplane. Then there exist $(a,b)\in\R^3\times\R$ with $(a,b)\neq(0,0)$ and $c\in\R$ such that
\begin{align*}
  \big\langle a,(2+\cos\theta)\gamma(e^{2\pi i s/L})\big\rangle
  +b\sin\theta=c
\end{align*}
for all $s,\theta$. From this identity, we see that $\big\langle a,\gamma\big(e^{2\pi i s/L}\big)\big\rangle$ is constant in $s$. We claim that $a\neq0$. Indeed, if $a=0$, then $b\sin\theta=c$ for all $\theta$, which implies $b=c=0$, contradicting $(a,b)\neq(0,0)$. Therefore $a\neq0$, and $\gamma$ is contained in
\begin{align*}
  \S^2\cap\{x\in\R^3:\langle a,x\rangle=d\}
\end{align*}
for some constant $d$. Since this intersection contains the smooth closed curve $\gamma$, it is a circle in $\S^2$. Hence $\gamma$ has constant geodesic curvature, contradicting our choice of $\gamma$. Thus $F$ is full in $\R^4$.

In summary, we obtain the following explicit compact counterexample.
\begin{theorem}
Let $L>0$, and let
\begin{align*}
  \gamma:\S^1_L\to\S^2\subset\R^3,
  \qquad
  \S^1_L=\{e^{2\pi i s/L}\in\C\mid s\in\R\},
\end{align*}
be a smooth unit speed embedded closed curve with nonconstant geodesic curvature $\kappa$. Define
\begin{align*}
  F:\S^1_L\times\S^1_{2\pi}\to\R^3\times\R=\R^4,
  \qquad
  \S^1_{2\pi}=\{e^{i\theta}\in\C\mid\theta\in\R\},
\end{align*}
by
\begin{align*}
  F\left(e^{2\pi i s/L},e^{i\theta}\right)
  =
  \bigl((2+\cos\theta)\gamma(e^{2\pi i s/L}),\sin\theta\bigr).
\end{align*}
Then $F$ is a smooth embedded full two-torus in $\R^4$ with flat normal bundle. Moreover, if the mean curvature flow starts from $F$, then the normal bundle does not remain flat for all sufficiently small positive times.
\end{theorem}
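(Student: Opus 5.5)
This theorem collects the computations of the present section, so the plan is to assemble them in order and then invoke Theorem~\ref{thm:nonpreservation_criterion}.

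\emph{Step 1: the profile curve is admissible.} For $\rho=2+\cos\theta$ and $z=\sin\theta$ we have $\rho\ge1>0$ and $(\rho')^2+(z')^2=\sin^2\theta+\cos^2\theta=1$. Moreover
\begin{align*}
\rho'z''-z'\rho''=(-\sin\theta)(-\sin\theta)-\cos\theta(-\cos\theta)=1 .
\end{align*}
Thus the profile conditions \eqref{eq:profile_conditions} hold, and $F$ is the ansatz \eqref{eq:ansatz_cpt}.

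\emph{Step 2: immersion and flatness.} The induced metric is $g=\rho^2ds^2+d\theta^2$, so $F$ is an immersion. The shape operators $S^\nu$ and $S^\xi$ are both diagonal in the frame $\{e_1,e_2\}$, so they commute. By the Ricci equation, $R^\perp\equiv0$ along $F_0=F$.

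\emph{Step 3: embeddedness and fullness.} To see that $F$ is injective, suppose $F(s_1,\theta_1)=F(s_2,\theta_2)$. The last coordinate gives $\sin\theta_1=\sin\theta_2$. Taking the norm of the $\R^3$ part gives $\rho(\theta_1)=\rho(\theta_2)$, because $|\gamma|=1$. Hence $\cos\theta_1=\cos\theta_2$, and so $\theta_1\equiv\theta_2$ modulo $2\pi$. Dividing the $\R^3$ part by $\rho>0$ then gives $\gamma(s_1)=\gamma(s_2)$. Since $\gamma$ is embedded, $s_1\equiv s_2$ modulo $L$.

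An injective immersion of a compact manifold is an embedding. Fullness follows from the hyperplane argument already given: any affine hyperplane containing the image would force $\gamma$ onto a circle of $\S^2$, hence to have constant geodesic curvature, contradicting the hypothesis on $\kappa$.

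\emph{Step 4: the criterion.} This is the substantive step. The main work is computing $\nabla S$ in the parallel normal frame, which is where $\nabla^\perp\nu=\nabla^\perp\xi=0$ and the relations \eqref{eq:profile_derivatives} are needed.

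The result of that computation is \eqref{eq:Ccal_cpt}, which for our profile reads
\begin{align*}
\Ccal(\nu,\xi)=\frac{2\kappa'\sin\theta}{\rho^4}\begin{pmatrix}0&-1\\1&0\end{pmatrix}.
\end{align*}
Since $\kappa$ is nonconstant, there is $s_0$ with $\kappa'(s_0)\neq0$. At $(s_0,\pi/2)$ we have $\rho=2$ and $\sin\theta=1$, so $\Ccal(\nu,\xi)\neq0$ there.

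Short-time existence and smoothness of the mean curvature flow from a compact embedded initial surface are standard. Theorem~\ref{thm:nonpreservation_criterion} therefore applies and gives $R^\perp(p,t)\neq0$ for all sufficiently small $t>0$.
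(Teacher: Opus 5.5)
Your proposal is correct and follows essentially the same route as the paper. You check the profile conditions, obtain flatness from the simultaneously diagonal shape operators, and use the hyperplane argument for fullness. You then evaluate the formula for $\Ccal(\nu,\xi)$ at $(s_0,\pi/2)$ and invoke the non-preservation criterion. Your explicit injectivity check, which uses $|\gamma|=1$ to recover $\rho(\theta)$ and hence $\theta$, spells out the paper's one-line remark that an embedded profile circle and an embedded $\gamma$ give an embedded torus.
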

This gives the compact counterexample stated in Theorem~\ref{thm:cpt}.

\section{An entire graphical counterexample}\label{sec:graph_example}
In this section, we construct a codimension two entire graph in $\R^4$ whose normal bundle is initially flat but does not remain flat under mean curvature flow. The example is uniformly area decreasing, with bounded slope and bounded geometry. This allows us to apply the long-time existence theorem of Savas-Halilaj and Smoczyk \cite[Theorem~5.3]{SavasHalilajSmoczyk24}.

\subsection{The ansatz for the initial entire graph with flat normal bundle}

Let $r:\R\to\R$ be a smooth function, to be determined later, and define a smooth map 
\begin{align*}
  f:\R^2\to\R^2,
  \qquad
  f(x,y)=(r(y)\cos x,r(y)\sin x).
\end{align*} 
We consider its graph
\begin{align*}
F(x,y)
=
(x,y,f(x,y))
=
\bigl(x,y,r(y)\cos x,r(y)\sin x\bigr).
\end{align*}
The tangent vectors of $F$ are
\begin{align*}
  F_x=
  (1,0,-r\sin x,r\cos x), \qquad 
  F_y=
  (0,1,r'\cos x,r'\sin x), 
\end{align*}
and the induced metric is given by 
\begin{align*}
  g=Edx^2+Gdy^2, 
\end{align*}
where 
\begin{align*}
  E:=\langle F_x, F_x \rangle=1+r^2, \qquad G:=\langle F_y, F_y\rangle=1+(r')^2. 
\end{align*}
Thus
\begin{align*}
  e_1:=\frac{1}{\sqrt E}F_x,
  \qquad
  e_2:=\frac{1}{\sqrt G}F_y
\end{align*}
form an orthonormal tangent frame.

We choose the orthonormal normal frame
\begin{align*}
  \nu
  :=
  \frac{1}{\sqrt E}
  (-r,0,-\sin x,\cos x), \qquad 
  \xi
  :=
  \frac{1}{\sqrt G}
  (0,r',-\cos x,-\sin x).
\end{align*}
The second derivatives of $F$ are
\begin{align}
  F_{xx}
  &=
  (0,0,-r\cos x,-r\sin x),\notag\\
  F_{xy}
  &=
  (0,0,-r'\sin x,r'\cos x),
  \label{eq:graph_second_derivatives}\\
  F_{yy}
  &=
  (0,0,r''\cos x,r''\sin x).
  \notag
\end{align}
Therefore,
\begin{align*}
  A(e_1,e_1)
  &=
  \frac{1}{E}(F_{xx})^\perp
  =
  \frac{r}{E\sqrt G}\xi,\\[3pt]
  A(e_1,e_2)&=A(e_2, e_1)
  =
  \frac{1}{\sqrt{EG}}(F_{xy})^\perp
  =
  \frac{r'}{E\sqrt G}\nu,\\[3pt]
  A(e_2,e_2)
  &=
  \frac{1}{G}(F_{yy})^\perp
  =
  -\frac{r''}{G\sqrt{G}}\xi.
\end{align*}
Hence, with respect to $\{e_1,e_2\}$,
\begin{align}\label{eq:graph_shape_operators}
  S^\nu
  &=
  \frac{r'}{E\sqrt G}
  \begin{pmatrix}
    0 & 1\\
    1 & 0
  \end{pmatrix},
  \qquad
  S^\xi
  =
  \frac{1}{EG\sqrt{G}}
  \begin{pmatrix}
    rG & 0\\
    0 & -Er''
  \end{pmatrix}.
\end{align}
Therefore,
\begin{align*}
  [S^\nu,S^\xi]=
  \frac{r'(Er''+rG)}{E^2G^2}
  \begin{pmatrix}
    0&-1\\
    1&0
  \end{pmatrix}.
\end{align*}
Thus, we require $r$ to satisfy
\begin{align}\label{eq:graph_flatness_condition}
Er''+rG=0, 
\end{align}
or equivalently,
\begin{align}\label{eq:graph_flat_ode}
(1+r^2)r''+r\bigl(1+(r')^2\bigr)=0.
\end{align}
Then $[S^\nu,S^\xi]=0$, and hence, by the Ricci equation, the normal bundle of $F$ is flat.

Now we discuss solutions of ODE \eqref{eq:graph_flat_ode}. Setting
$q=r'$, \eqref{eq:graph_flat_ode} is equivalent to the first-order
system
\begin{align}\label{eq:graph_first_order_system}
r'=q,
\qquad
q'=-\frac{r(1+q^2)}{1+r^2}.
\end{align}
The corresponding vector field
\begin{align*}
V(r,q)
=
\left(
q,
-\frac{r(1+q^2)}{1+r^2}
\right)
\end{align*}
is smooth on the $(r,q)$-plane. Hence, for any initial data,
\eqref{eq:graph_first_order_system} admits a unique local solution.
Along any such solution, by \eqref{eq:graph_flatness_condition}, we observe that
\begin{align*}
(EG)'
=
2r'(Er''+rG)
=
0.
\end{align*}
Therefore,
\begin{align}\label{eq:energy_curve}
EG=(1+r^2)(1+q^2)=K
\end{align}
for some constant $K$ determined by the initial data. In particular,
$K\geq 1$. The case $K=1$ gives the constant solution $r=q=0$.

We now fix $K>1$ and choose the initial data
\begin{align*}
r(0)=\sqrt{K-1},
\qquad
q(0)=0.
\end{align*}
By \eqref{eq:energy_curve},
\begin{align*}
|r|\leq\sqrt{K-1},
\qquad
|q|\leq\sqrt{K-1}.
\end{align*}
Hence the solution $(r(y),q(y))$ remains bounded and therefore extends to all $y\in\R$. The corresponding level curve in the $(r,q)$-plane is
\begin{align*}
\Gamma_K
=
\left\{
(r,q)\in\R^2
\mid
(1+r^2)(1+q^2)=K
\right\},
\end{align*}
and it admits the parametrization
\begin{align*}
\theta
\longmapsto
\left(
k\cos\theta,
-\frac{k\sin\theta}{\sqrt{1+k^2\cos^2\theta}}
\right),
\qquad
\theta\in\R,
\end{align*}
where $k=\sqrt{K-1}$. Thus $\Gamma_K$ is a smooth closed curve. The vector field in
\eqref{eq:graph_first_order_system} does not vanish on $\Gamma_K$,
since $r'=q'=0$ would imply $r=q=0$, which is incompatible with
\eqref{eq:energy_curve} for $K>1$. Thus the solution through
$(\sqrt{K-1},0)$ moves periodically along $\Gamma_K$. In particular,
$r$ is a nonconstant smooth periodic function on $\R$.

We have therefore shown that, for every $K>1$, there exists a nonconstant smooth periodic function
$r=r_K:\R\to\R$ satisfying ODE \eqref{eq:graph_flat_ode}. The corresponding map
\begin{align*}
f(x,y)=(r(y)\cos x,r(y)\sin x)
\end{align*}
defines an entire graph $F(x,y)=(x,y,f(x,y))$ in $\R^4$ with flat normal bundle.

\subsection{Verification of the non-preservation criterion}
We next verify that the entire graph constructed above satisfies the
non-preservation criterion in Theorem~\ref{thm:nonpreservation_criterion}.
Recall from \eqref{eq:graph_shape_operators} that, with respect to
$\{e_1,e_2\}$,
\begin{align*}
S^\nu
&=
\frac{r'}{E\sqrt G}
\begin{pmatrix}
0&1\\
1&0
\end{pmatrix},
\qquad
S^\xi
=
\frac{r}{E\sqrt G}
\begin{pmatrix}
1&0\\
0&1
\end{pmatrix}.
\end{align*}
Indeed, \eqref{eq:graph_flatness_condition} implies
\begin{align*}
-\frac{r''}{G\sqrt G}
=
\frac{r}{E\sqrt G}.
\end{align*}
Applying this to the $(2,2)$-component of $S^\xi$ gives the second identity.

Using \eqref{eq:graph_second_derivatives}, a direct computation shows that the tangent connection is given by
\begin{align*}
  \nabla_{e_1}e_1
  &=
  -\frac{rr'}{E\sqrt G}e_2,
  &
  \nabla_{e_1}e_2
  &=
  \frac{rr'}{E\sqrt G}e_1,\\[6pt]
  \nabla_{e_2}e_1
  &=
  0,
  &
  \nabla_{e_2}e_2
  &=
  0,
\end{align*}
whereas the normal connection satisfies
\begin{align*}
  \nabla^\perp_{e_1}\nu
  &=
  \frac{1}{E\sqrt G}\xi,
  &
  \nabla^\perp_{e_1}\xi
  &=
  -\frac{1}{E\sqrt G}\nu,\\[6pt]
  \nabla^\perp_{e_2}\nu
  &=
  0,
  &
  \nabla^\perp_{e_2}\xi
  &=
  0.
\end{align*}
Remark that, unlike in the compact example in Section~\ref{sec:cpt_example}, the chosen normal frame
$\{\nu,\xi\}$ is not parallel. 

We first compute $(\nabla_{e_1}S)^\nu(e_1)$. By definition,
\begin{align*}
  (\nabla_{e_1}S)^\nu(e_1)
  &=
  \nabla_{e_1}\bigl(S^\nu(e_1)\bigr)
  -
  S^\nu\bigl(\nabla_{e_1}e_1\bigr)
  -
  S^{\nabla^\perp_{e_1}\nu}(e_1).
\end{align*}
Since $e_1=\dfrac{1}{\sqrt E}F_x$, while $r$, $E$, and $G$ depend only on $y$, using the above formulas for the tangent and normal connections together with \eqref{eq:graph_shape_operators}, we have 
\begin{align*}
  \nabla_{e_1}\bigl(S^\nu(e_1)\bigr)
  &=
  \frac{r'}{E\sqrt G}\nabla_{e_1}e_2
  =
  \frac{r(r')^2}{E^2G}e_1,\\[3pt]
  -S^\nu\bigl(\nabla_{e_1}e_1\bigr)
  &=
  \frac{rr'}{E\sqrt G}S^\nu(e_2)
  =
  \frac{r(r')^2}{E^2G}e_1,\\[3pt]
  -S^{\nabla^\perp_{e_1}\nu}(e_1)
  &=
  -\frac{1}{E\sqrt G}S^\xi(e_1)
  =
  -\frac{r}{E^2G}e_1.
\end{align*}
Hence, 
\begin{align*}
  (\nabla_{e_1}S)^\nu(e_1)
  =
  \frac{r\bigl(2(r')^2-1\bigr)}{E^2G}e_1.
\end{align*}
A similar computation gives the remaining components. With respect to $\{e_1,e_2\}$, we have
\begin{align*}
  (\nabla_{e_1}S)^\nu
  &=
  \frac{r}{E^2G}
  \begin{pmatrix}
    2(r')^2-1 & 0\\[4pt]
    0 & -2(r')^2-1
  \end{pmatrix},
  \qquad
  (\nabla_{e_1}S)^\xi
  =
  \frac{r'}{E^2G}
  \begin{pmatrix}
    0 & 1\\
    1 & 0
  \end{pmatrix}.
\end{align*}
On the other hand, since
\begin{align*}
  \nabla_{e_2}e_1
=
\nabla_{e_2}e_2
=
0,
\qquad
\nabla^\perp_{e_2}\nu
=
\nabla^\perp_{e_2}\xi
=
0,
\end{align*}
we see that $(\nabla_{e_2}S)^\xi$ is a scalar multiple of the $2\times 2$ identity matrix. Hence
\begin{align*}
  \big[
    (\nabla_{e_2}S)^\nu,
    (\nabla_{e_2}S)^\xi
  \big]
  =
  0. 
\end{align*}
It follows that
\begin{align*}
  \Ccal(\nu,\xi)
  =\big[
    (\nabla_{e_1}S)^\nu,
    (\nabla_{e_1}S)^\xi
  \big]
  =
  \frac{4r(r')^3}{E^4G^2}
  \begin{pmatrix}
    0&1\\
    -1&0
  \end{pmatrix}.
\end{align*}
Since $r$ is nonconstant, there exists a point $y_0\in\R$ such that
\begin{align*}
  r(y_0)r'(y_0)\neq0.
\end{align*}
Hence
\begin{align*}
  \Ccal(\nu,\xi)\big|_{(x,y_0)}
  \neq0.
\end{align*}
By Theorem~\ref{thm:nonpreservation_criterion}, as long as the mean curvature
flow exists, the flatness of the normal bundle is not preserved for all
sufficiently small positive times. 

\subsection{Bounded slope, bounded geometry, and area-decreasing property}
We next verify that the entire graph $F(x,y)=(x,y,f(x,y))$ constructed
above has bounded slope and bounded geometry. We also show that it is
uniformly area decreasing.

We first verify that the graph has bounded slope. Since the differential of the map
$f:\R^2\to\R^2$ is
\begin{align*}
df=
\begin{pmatrix}
-r\sin x & r'\cos x\\
r\cos x & r'\sin x
\end{pmatrix},
\end{align*}
the slope function of the graph is
\begin{align*}
v
=
\sqrt{\det\bigl(I+{}^t(df)df\bigr)}
=
\sqrt{(1+r^2)\bigl(1+(r')^2\bigr)}
=
\sqrt{EG}
=
\sqrt{K},
\end{align*}
where $I$ denotes the $2\times2$ identity matrix and we used
\eqref{eq:energy_curve} in the last equality. Thus the graph has bounded slope
(actually, it is constant\footnote{The boundedness of $|df|$ also follows directly from
$|r|,|r'|\leq \sqrt{K-1}$.}).

We next verify the bounded geometry. Since $r$ is smooth and periodic, all derivatives of $r$ are bounded, and so are all derivatives of $F$ and of the coefficients of $g$. Moreover, since $E,G\geq1$, the coefficients of $g^{-1}$ and all of their derivatives are also bounded. Therefore the components of $A$ and all of their covariant derivatives are bounded. It follows that, for every $\ell\geq0$,
\begin{align*}
\sup_{\R^2}|\nabla^\ell A|
<
\infty.
\end{align*}

Finally, we verify that the graph is uniformly area decreasing. Since the singular values of $df$ are $|r|$ and $|r'|$, the function $\psf$
introduced in \cite[Section~4]{SavasHalilajSmoczyk24} is given in our case by
\begin{align*}
\psf
=
\frac{1-r^2(r')^2}
     {(1+r^2)(1+(r')^2)}
=
\frac{1-r^2(r')^2}{K}.
\end{align*}
By \eqref{eq:energy_curve} and the arithmetic-geometric mean inequality, 
\begin{align*}
  r^2(r')^2
  &=
  K-(1+r^2)-\bigl(1+(r')^2\bigr)+1\\
  &\leq
  K-2\sqrt K+1
  =
  (\sqrt K-1)^2.
\end{align*}
Hence, if we choose $1<K<4$, then
\begin{align*}
  \psf
  \geq
  \frac{1-(\sqrt K-1)^2}{K}
  =:\varepsilon
  >
  0.
\end{align*}
Thus $\psf$ has a positive lower bound $\varepsilon$, and hence the graph is uniformly area decreasing in the sense of \cite[Section~4]{SavasHalilajSmoczyk24}.

In summary, we obtain the following theorem, which provides the initial submanifold used in Theorem~\ref{thm:graph}.
\begin{theorem}\label{thm:entire_graph_initial}
For every $1<K<4$, let $r=r_K:\R\to\R$ be the solution of
\begin{align*}
  (1+r^2)r''+r\bigl(1+(r')^2\bigr)=0
\end{align*}
with initial data
\begin{align*}
  r(0)=\sqrt{K-1},
  \qquad
  r'(0)=0.
\end{align*}
Then $r$ is a nonconstant smooth periodic function. The corresponding
codimension two entire graph 
\begin{align*}
  F(x,y)
  =
  \bigl(x,y,r(y)\cos x,r(y)\sin x\bigr)
\end{align*}
 in $\R^4$ has flat normal bundle, bounded slope, and bounded geometry. Moreover, it is
uniformly area decreasing. In addition, the quantity $\Ccal$ is nonzero at
some point.
\end{theorem}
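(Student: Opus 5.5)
The statement collects what was established in the preceding subsections, so the proof amounts to assembling those pieces in order.

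\textbf{The ODE solution.} Fix $1<K<4$ and write $k=\sqrt{K-1}$. The system \eqref{eq:graph_first_order_system} is given by a smooth vector field on the $(r,q)$-plane, so it has a unique local solution with data $(k,0)$. Along any solution the relation $(EG)'=2r'(Er''+rG)=0$ holds, so the trajectory stays on the level curve $\Gamma_K$ given by \eqref{eq:energy_curve}. This gives the a priori bounds $|r|,|r'|\le k$, and hence the solution exists for all $y\in\R$. By the explicit parametrization, $\Gamma_K$ is a smooth closed curve. The vector field has no zero on $\Gamma_K$, since its only zero is the origin and $K>1$. A nonvanishing flow on a closed curve is periodic, so $r$ is smooth and periodic. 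It is nonconstant because $r(0)=k\neq0$ and $r''(0)=-k\ne0$.

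\textbf{Geometric properties.} Substituting this $r$ into \eqref{eq:graph_shape_operators}, equation \eqref{eq:graph_flatness_condition} gives $[S^\nu,S^\xi]=0$. The Ricci equation then shows the normal bundle is flat.

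For bounded slope, compute $v=\sqrt{\det(I+{}^t(df)df)}=\sqrt{EG}=\sqrt K$, which is constant.

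For bounded geometry, note that $r$ is smooth and periodic, so every derivative of $r$ is bounded. The coefficients of $g$ and of $A$ in the coordinates $(x,y)$ are polynomial expressions in $r,r',r'',\dots$ and $\sin x,\cos x$. Since $E,G\ge1$, the inverse metric is also controlled. Therefore $\sup|\nabla^\ell A|<\infty$ for every $\ell$.

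For the uniformly area decreasing property, the singular values of $df$ are $|r|$ and $|r'|$. This gives $\psf=(1-r^2r'^2)/K$. Applying AM--GM to \eqref{eq:energy_curve} yields $r^2r'^2\le(\sqrt K-1)^2<1$ when $K<4$. Hence $\psf$ has a positive lower bound.

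\textbf{Nonvanishing of $\Ccal$.} Using the computed connections, the formula
\[
\Ccal(\nu,\xi)=\frac{4r(r')^3}{E^4G^2}\begin{pmatrix}0&1\\-1&0\end{pmatrix}
\]
holds. So it remains to find a point $y_0$ with $r(y_0)r'(y_0)\neq0$. Since $r$ is nonconstant, $r'\neq 0$ on some open interval. On that interval $r$ is strictly monotone, so it vanishes at most once there, and we can pick $y_0$ in the interval with $r(y_0)\neq0$.

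\textbf{Main obstacle.} The only step that is not routine verification is the computation of $(\nabla_{e_1}S)^\nu$, $(\nabla_{e_1}S)^\xi$ and the $e_2$-derivatives. Here the normal frame is not parallel, so the terms $S^{\nabla^\perp\nu}$ must be included. I would recheck these entries carefully, in particular that the $e_2$-commutator vanishes because $(\nabla_{e_2}S)^\xi$ is scalar.
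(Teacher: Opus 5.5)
Your proposal is correct and follows essentially the same route as the paper: the conserved quantity $EG=K$ and the periodic orbit on $\Gamma_K$, the commutator condition \eqref{eq:graph_flatness_condition}, $v=\sqrt K$, $\psf=(1-r^2(r')^2)/K$ with the AM--GM bound, and the explicit formula for $\Ccal$, whose entries do check out (including the $S^{\nabla^\perp\nu}$ terms and the scalar $(\nabla_{e_2}S)^\xi$). One harmless slip: the ODE gives $r''(0)=-k/(1+k^2)=-k/K$, not $-k$; this is still nonzero, so your nonconstancy argument stands.
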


\subsection{Mean curvature flow starting from the entire graph}
We now consider the mean curvature flow starting from the entire graph
constructed in Theorem~\ref{thm:entire_graph_initial}. Since the initial
submanifold is noncompact, short-time existence and uniqueness of the mean
curvature flow are in general not automatic, and we therefore appeal to
the results of Savas-Halilaj and Smoczyk. By
\cite[Proposition~2.1]{SavasHalilajSmoczyk24}, the bounded slope and bounded
geometry established in the previous subsection imply that there exists a
unique smooth entire graphical mean curvature flow on some time interval
$[0,T)$, $T>0$, in the class considered there.

We denote the evolving graph at time $t$ by $M_t$. Since the initial graph
$M_0$ in Theorem~\ref{thm:entire_graph_initial} has nonvanishing $\Ccal$,
Theorem~\ref{thm:nonpreservation_criterion} implies that the flatness of
the normal bundle is lost immediately along the flow. More precisely, the
normal bundle of $M_t$ is not flat for all sufficiently small $t>0$.

Finally, in addition to the bounded geometry of $M_0$, we have shown that $M_0$ is uniformly area decreasing with $\psf\geq\varepsilon>0$. Hence \cite[Theorem~5.3]{SavasHalilajSmoczyk24} applies and yields a unique graphical mean curvature flow defined for all $t\geq0$ satisfying 
\begin{align*}
\sup_{M_t}|\nabla^\ell A|\leq C(\ell)<\infty
\end{align*}
for every $\ell\geq0$ and $t\geq0$. Moreover, the lower bound $\psf\geq\varepsilon$ is preserved along the flow. It follows that $|df|$, and hence the slope function $v$, remains bounded for all $t\geq0$. This completes the proof of Theorem~\ref{thm:graph}.

\section{Loss of the parallel principal normal condition}\label{sec:ppn_example}
\subsection{Parallel principal normal and flat normal bundle}
Let $F:M^m\longrightarrow\R^n$ be a smooth immersion with nonvanishing mean curvature vector $H$. The unit normal vector field
\begin{align*}
\n=\frac{H}{|H|}
\end{align*}
is called the \emph{principal normal}.

\begin{definition}
We say that $M$ has \emph{parallel principal normal} if
\begin{align*}
\nabla^\perp\n=0.
\end{align*}
\end{definition}

This condition is automatic for hypersurfaces, whereas in higher codimension it imposes a nontrivial restriction on the normal connection.

Suppose now that the codimension $n-m$ is two. If the principal normal $\n$ is parallel, then, by the definition of the normal curvature,
\begin{align*}
R^\perp(X,Y)\n=0
\end{align*}
for all tangent vector fields $X,Y$. Let $\b$ be a local unit normal field orthogonal to $\n$. Since $R^\perp(X,Y)$ is skew-symmetric on the normal space, we have
\begin{align*}
\langle R^\perp(X,Y)\b,\n\rangle
&=
-\langle \b,R^\perp(X,Y)\n\rangle
=
0,\\
\langle R^\perp(X,Y)\b,\b\rangle
&=
0.
\end{align*}
Hence $R^\perp(X,Y)\b=0$, and therefore
\begin{align*}
R^\perp(X,Y)=0.
\end{align*}
Thus, in codimension two, the parallel principal normal condition implies the flatness of the normal bundle, as noted for example by Smoczyk \cite[Remark~1.2~(ii)]{Smoczyk05}. In higher codimension, however, the parallel principal normal condition does not in general imply the flatness of the normal bundle.

\subsection{Non-isoparametric CMC surfaces in the three-sphere}
Let $\phi:M^2\longrightarrow\S^3$ be a compact CMC immersion with nonzero mean curvature, where $\S^3$ is the unit three-sphere in $\R^4$. Then the mean curvature vector is nowhere vanishing and determines a global unit normal field $\nu$. Let $B$ be the second fundamental form of $M$ in $\S^3$. For a local orthonormal tangent frame $\{e_1,e_2\}$, we define the scalar mean curvature by
\begin{align*}
h
=
\sum_{i=1}^2
\langle B(e_i,e_i),\nu\rangle.
\end{align*}

We further assume that $\phi$ is non-isoparametric. Then we have the following.

\begin{lemma}\label{lem:nonisoparametric_B}
$|B|^2$ is nonconstant on $M$.
\end{lemma}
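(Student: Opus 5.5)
The plan is to argue by contraposition: assuming $|B|^2$ is constant on $M$, we show that the principal curvatures of $\phi$ in $\S^3$ are constant, which contradicts the non-isoparametric hypothesis. The only input is linear algebra on the shape operator of $M$ in $\S^3$ at each point, combined with the constancy of $h$.

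Let $\kappa_1,\kappa_2$ denote the principal curvatures of $\phi$ with respect to $\nu$, namely the eigenvalues of the shape operator of $B$ in the direction $\nu$. Since the codimension of $M$ in $\S^3$ is one, we have
\begin{align*}
h=\kappa_1+\kappa_2,\qquad |B|^2=\kappa_1^2+\kappa_2^2 .
\end{align*}
By the CMC assumption, $h$ is a nonzero constant. If $|B|^2$ were also constant, then
\begin{align*}
\kappa_1\kappa_2=\tfrac12\bigl(h^2-|B|^2\bigr)
\end{align*}
would be constant too. Consequently $\kappa_1$ and $\kappa_2$ would be the two roots of the fixed quadratic
\begin{align*}
\lambda^2-h\lambda+\tfrac12\bigl(h^2-|B|^2\bigr)=0 .
\end{align*}
The unordered pair $\{\kappa_1,\kappa_2\}$ is therefore the same at every point of $M$. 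Since $M$ is connected and the principal curvatures, ordered as $\kappa_1\le\kappa_2$, are continuous, each $\kappa_i$ is constant. That is precisely the statement that $\phi$ is isoparametric, contradicting the assumption. Hence $|B|^2$ is nonconstant.

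The one point that requires care is the definition of "isoparametric." It needs to be matched to the convention in Perdomo's work, namely constant principal curvatures, which for surfaces in $\S^3$ means the Clifford-type tori and totally umbilic spheres. If the paper instead takes "isoparametric" to mean a level set of an isoparametric function, we would invoke Cartan's theorem that hypersurfaces of space forms with constant principal curvatures are isoparametric in that sense. Beyond this definitional check, the argument is routine.
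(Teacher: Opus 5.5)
Your proof is correct and follows the paper's argument: constancy of $h$ and $|B|^2$ makes $\kappa_1\kappa_2=\tfrac12(h^2-|B|^2)$ constant, so the principal curvatures are the roots of a fixed quadratic and $\phi$ would be isoparametric. Your appeal to connectedness and continuity is harmless but not needed, since the smaller and larger roots of that fixed quadratic already give the ordered principal curvatures at every point.
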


\begin{proof}
Suppose, to the contrary, that $|B|^2$ is constant. At each point, let $\kappa_1$ and $\kappa_2$ denote the principal curvatures of $M$ in $\S^3$. Then
\begin{align*}
h
&=
\kappa_1+\kappa_2,
&
|B|^2
&=
\kappa_1^2+\kappa_2^2.
\end{align*}
Since $h$ and $|B|^2$ are constant,
\begin{align*}
\kappa_1\kappa_2
=
\frac{h^2-|B|^2}{2}
\end{align*}
is also constant. It follows that $\kappa_1$ and $\kappa_2$ are the two roots of
\begin{align*}
\lambda^2-h\lambda+\frac{h^2-|B|^2}{2}=0.
\end{align*}
Since the coefficients are constant, the unordered pair $\{\kappa_1,\kappa_2\}$ is independent of the point, so $\phi$ is isoparametric, a contradiction.
\end{proof}

\subsection{Spherical mean curvature flow and rescaling}
Let $\phi:M^2\longrightarrow\S^3$ be a compact non-isoparametric CMC immersion with nonzero mean curvature. Let
\begin{align*}
\xi:M\times[0,\tau_*)\longrightarrow\S^3
\end{align*}
be the mean curvature flow in the unit three-sphere starting from $\phi$, that is,
\begin{align*}
\frac{d\xi}{d\tau}(\cdot,\tau)
=
h(\cdot,\tau)\nu(\cdot,\tau),
\qquad
\xi(\cdot,0)
=
\phi.
\end{align*}
We refer to this flow as the \emph{spherical mean curvature flow} starting from $\phi$. Here and below, along $\xi_\tau$, we denote by $B$, $\nu$, and $h$ the second fundamental form, unit normal field, and scalar mean curvature in $\S^3$, respectively.

By \cite{Huisken87}, the scalar mean curvature satisfies
\begin{align}\label{eq:spherical_mean_curvature_evolution}
\frac{dh}{d\tau}
=
\Delta h+\bigl(|B|^2+2\bigr)h.
\end{align}
Using this formula, we obtain the following.

\begin{lemma}\label{lem:spherical_h_nonconstant}
For all sufficiently small $\tau>0$, $h(\cdot,\tau)$ is nonconstant on $M$.
\end{lemma}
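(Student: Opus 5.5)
At $\tau=0$ the function $h(\cdot,0)$ is constant, so $\Delta h=0$ there. Evaluating \eqref{eq:spherical_mean_curvature_evolution} at $\tau=0$ therefore gives
\begin{align*}
\frac{dh}{d\tau}(\cdot,0)=\bigl(|B|^2+2\bigr)h_0,
\end{align*}
where $h_0\neq0$ is the constant mean curvature of $\phi$. By Lemma~\ref{lem:nonisoparametric_B}, $|B|^2$ is nonconstant on $M$, so the initial time derivative of $h$ is nonconstant. Choose points $p,q\in M$ with $|B|^2(p,0)\neq|B|^2(q,0)$. The plan is to follow the difference of $h$ at these two fixed points in time.

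To do this, set
\begin{align*}
\delta(\tau):=h(p,\tau)-h(q,\tau).
\end{align*}
This is a smooth function of $\tau$ on $[0,\tau_*)$ with $\delta(0)=0$. Its derivative at $\tau=0$ is
\begin{align*}
\delta'(0)=h_0\bigl(|B|^2(p,0)-|B|^2(q,0)\bigr)\neq0.
\end{align*}
Hence $\delta(\tau)\neq0$ for all sufficiently small $\tau>0$, so $h(\cdot,\tau)$ takes different values at $p$ and $q$ and is nonconstant. The argument uses only two fixed points, so no uniformity over $M$ is needed.

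The main technical point is smoothness of the spherical flow up to $\tau=0$. This is needed to evaluate the evolution equation at $\tau=0$ and to differentiate $\delta$ there. Since $M$ is compact and $\phi$ is smooth, standard short-time existence for mean curvature flow in a compact ambient manifold gives a flow that is smooth on $M\times[0,\tau_*)$, including the initial time. In particular $h$, $\Delta h$ and $|B|^2$ are continuous up to $\tau=0$.

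A second check is the frame in which $h$ is computed. Equation \eqref{eq:spherical_mean_curvature_evolution} is stated for the scalar mean curvature with respect to the continuously evolving unit normal $\nu$. Because $h_0\neq0$, $h$ stays nonzero for small $\tau$, so this normal is well defined and there is no sign ambiguity.
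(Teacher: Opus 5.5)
Your proposal is correct and follows the paper's proof essentially step for step. You use $\Delta h=0$ at $\tau=0$ to get $\frac{dh}{d\tau}(\cdot,0)=h_0(|B|^2+2)$, invoke Lemma~\ref{lem:nonisoparametric_B} to pick two points where this differs, and conclude from $\delta(0)=0$, $\delta'(0)\neq0$. Concluding directly from $\delta'(0)\neq0$ is slightly cleaner than the paper's appeal to strict monotonicity of the difference, and your remarks on regularity up to $\tau=0$ and on the choice of normal are harmless additions.
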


\begin{proof}
Since $h$ is a nonzero constant at $\tau=0$, we have by \eqref{eq:spherical_mean_curvature_evolution},
\begin{align}\label{eq:initial_mean_curvature_derivative}
\frac{dh}{d\tau}(\cdot, 0)
=
h\bigl(|B|^2+2\bigr).
\end{align}
On the other hand, $|B|^2$ is nonconstant at $\tau=0$ by Lemma~\ref{lem:nonisoparametric_B}. Hence the right-hand side of \eqref{eq:initial_mean_curvature_derivative} is nonconstant on $M$. Therefore there exist $p_1,p_2\in M$ such that
\begin{align*}
\frac{dh}{d\tau}(p_1,0)
\neq
\frac{dh}{d\tau}(p_2,0). 
\end{align*}
This means that $h(p_1,\tau)-h(p_2,\tau)$ is strictly increasing or decreasing for all sufficiently small $\tau>0$. Since 
\begin{align*}
h(p_1,0)-h(p_2,0)=0, 
\end{align*}
it follows that
\begin{align*}
h(p_1,\tau)-h(p_2,\tau)\neq 0
\end{align*}
for all sufficiently small $\tau>0$. Thus $h(\cdot,\tau)$ is nonconstant for all sufficiently small $\tau>0$.
\end{proof}

We next rescale the spherical mean curvature flow to obtain a Euclidean mean curvature flow in $\R^4$. Define $F:M\times[0,T)\longrightarrow\R^4$ by
\begin{align}\label{eq:ppn_scaling}
F(p,t)
&=
r(t)\xi(p,\tau(t)),
&
r(t)
&=
\sqrt{1-4t},
&
\tau(t)
&=
-\frac14\log(1-4t),
\end{align}
for $t\in[0,T)$, where $T>0$ is chosen so that $\tau(t)<\tau_*$. This rescaling separates the Euclidean mean curvature flow into the shrinking radial factor and the spherical mean curvature flow. 

\begin{lemma}\label{lem:spherical_euclidean_mcf}
The map $F:M\times[0,T)\longrightarrow\R^4$ defined by \eqref{eq:ppn_scaling} is a mean curvature flow in $\R^4$. Moreover, for every $t\in[0,T)$, the time-slice $F_t=F(\cdot,t)$ is an immersed hypersurface of $\S^3(r(t))$.
\end{lemma}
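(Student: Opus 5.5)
The plan is a direct computation. First I will express the Euclidean mean curvature vector of a surface lying in a round sphere in terms of its spherical mean curvature. Then I will check that the time derivative of $F$ defined in \eqref{eq:ppn_scaling} equals this vector, using the chain rule together with the explicit formulas for $r(t)$ and $\tau(t)$. The second assertion is immediate: $|\xi|=1$ gives $|F(p,t)|=r(t)$, and $F_t=r(t)\xi_{\tau(t)}$ is a dilation of an immersion into $\S^3$, hence an immersion into $\S^3(r(t))$.

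\emph{Step 1 (mean curvature of $\xi_\tau$ in $\R^4$).} Fix $\tau$ and regard $\xi_\tau:M\to\S^3\subset\R^4$ as a codimension two immersion into $\R^4$. Its normal space at each point is spanned by the spherical unit normal $\nu$ and the position vector $\xi$. For tangent vectors $X,Y$ we decompose the Euclidean derivative as
\begin{align*}
D_XY=\nabla_XY+B(X,Y)-\langle X,Y\rangle\,\xi .
\end{align*}
Here the last term is the second fundamental form of the unit sphere $\S^3$ in $\R^4$, which holds because $\langle D_XY,\xi\rangle=-\langle Y,D_X\xi\rangle=-\langle X,Y\rangle$. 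Taking the trace over an orthonormal frame $\{e_1,e_2\}$ gives the Euclidean mean curvature vector
\begin{align*}
H_{\xi_\tau}=h\nu-2\xi .
\end{align*}

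\emph{Step 2 (scaling).} Dilating an immersion by a factor $\lambda>0$ leaves the unit normals unchanged and multiplies the second fundamental form by $1/\lambda$. Hence the mean curvature vector of $F_t=r\xi_\tau$ is
\begin{align*}
H_{F_t}=\frac{1}{r}\bigl(h\nu-2\xi\bigr),
\end{align*}
evaluated at $\tau=\tau(t)$.

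\emph{Step 3 (time derivative).} By the chain rule and the spherical flow equation,
\begin{align*}
\frac{dF}{dt}=r'(t)\,\xi+r(t)\tau'(t)\,h\nu .
\end{align*}
From \eqref{eq:ppn_scaling} we have $r'=-2/\sqrt{1-4t}=-2/r$ and $\tau'=1/(1-4t)=1/r^2$. Substituting these gives
\begin{align*}
\frac{dF}{dt}=-\frac{2}{r}\xi+\frac1r h\nu=H_{F_t}.
\end{align*}
So $F$ is a mean curvature flow in $\R^4$ on $[0,T)$. It is smooth because $\xi$ is smooth on $[0,\tau_*)$ and $\tau(t)<\tau_*$.

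The only step needing care is the sign and normalization in Step 1, namely that the sphere contributes $-\xi$ per tangent direction and hence $-2\xi$ in total. Once that is fixed, the specific choices of $r(t)$ and $\tau(t)$ are exactly what make the radial and spherical components match, so I expect no real obstacle beyond this bookkeeping.
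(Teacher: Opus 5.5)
Your proposal is correct and follows the paper's proof essentially step for step: the Euclidean mean curvature $h\nu-2\xi$ of $\xi_\tau$, the factor $1/r$ from dilation, the chain rule with $dr/dt=-2/r$ and $d\tau/dt=1/r^2$, and $|F(\cdot,t)|=r(t)$ for the second assertion. Your Step~1 also supplies the short derivation of the $-2\xi$ term, which the paper simply states.
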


\begin{proof}
Viewing $\xi(\cdot,\tau):M^2\longrightarrow\S^3\subset\R^4$ as an immersion into $\R^4$, its Euclidean mean curvature vector is $h\nu-2\xi$. Since dilation by the factor $r$ multiplies the mean curvature vector by $r^{-1}$, the Euclidean mean curvature vector $H$ of $F_t$ is
\begin{align*}
H
=
\frac{1}{r}\bigl(h\nu-2\xi\bigr).
\end{align*}

On the other hand, by \eqref{eq:ppn_scaling}, we have
\begin{align*}
\frac{dr}{dt}
=
-\frac{2}{r},
\qquad
\frac{d\tau}{dt}
=
\frac{1}{r^2}.
\end{align*}
Therefore
\begin{align*}
\frac{dF}{dt}
&=
\frac{dr}{dt}\xi
+
r\frac{d\tau}{dt}\frac{d\xi}{d\tau}\\[6pt]
&=
-\frac{2}{r}\xi
+
\frac{1}{r}h\nu=H.
\end{align*}
Thus $F$ satisfies the mean curvature flow in $\R^4$.

The second assertion of the lemma follows immediately from $|F(\cdot,t)|=r(t)$.
\end{proof}

We can now state the general mechanism behind the loss of the parallel principal normal condition.

\begin{theorem}\label{thm:cmc_ppn}
Let $\phi:M^2\longrightarrow\S^3$ be a compact non-isoparametric CMC immersion with nonzero mean curvature, and
\begin{align*}
\xi:M\times[0,\tau_*)\longrightarrow\S^3
\end{align*}
a spherical mean curvature flow starting from $\phi$. Consider the mean curvature flow
\begin{align*}
F:M\times[0,T)\longrightarrow\R^4
\end{align*}
obtained from $\xi$ by the rescaling \eqref{eq:ppn_scaling}. Then 
\begin{itemize}
\item[(1)] The normal bundle of $F_t$ is flat for every $t\in[0,T)$.
\item[(2)] The principal normal is parallel at $t=0$, but it is not parallel for all sufficiently small $t>0$.
\end{itemize}
\end{theorem}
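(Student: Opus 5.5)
Throughout, I work with the normal frame of $F_t$ in $\R^4$ given by $\{\nu,\xi\}$, where $\nu$ is the unit normal of $\xi_\tau$ in $\S^3$ and $\xi=F/r$ is the radial unit vector. The plan has three steps.

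\textbf{Step 1: flatness for all $t$.} The radial field $F/r$ is parallel in the normal bundle. Indeed, $D_X(F/r)=X/r$ is tangent, so $\nabla^\perp_X(F/r)=0$. In codimension two, a parallel unit normal field forces $R^\perp=0$, by the same skew-symmetry argument as in the previous subsection with $\n$ replaced by $\xi$. This gives (1) for every $t\in[0,T)$.

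\textbf{Step 2: the principal normal and its covariant derivative.} By Lemma~\ref{lem:spherical_euclidean_mcf},
\begin{align*}
H=\frac1r\bigl(h\nu-2\xi\bigr),
\end{align*}
so $H$ never vanishes and
\begin{align*}
\n=\frac{h\nu-2\xi}{\sqrt{h^2+4}}.
\end{align*}
Next, $\nu$ is also normally parallel. Since $\langle D_X\nu,\nu\rangle=0$ and $\langle D_X\nu,\xi\rangle=-\langle\nu,D_X\xi\rangle=-\langle \nu, X\rangle/r=0$, we get $\nabla^\perp\nu=0$. Hence
\begin{align*}
\nabla^\perp_X\n
=X\!\left(\frac{h}{\sqrt{h^2+4}}\right)\nu
+X\!\left(\frac{-2}{\sqrt{h^2+4}}\right)\xi .
\end{align*}
The map $h\mapsto h/\sqrt{h^2+4}$ has derivative $4/(h^2+4)^{3/2}>0$, so it is strictly increasing. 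Therefore $\nabla^\perp\n=0$ at a time $t$ if and only if $dh(\cdot,\tau(t))=0$, that is, if and only if $h(\cdot,\tau(t))$ is locally constant on $M$. If $M$ is disconnected, I argue on the component containing the points $p_1,p_2$ from Lemma~\ref{lem:spherical_h_nonconstant}, or simply assume $M$ is connected.

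\textbf{Step 3: conclusion.} At $t=0$, $h$ is constant because $\phi$ is CMC, so $\n$ is parallel. For small $t>0$, $\tau(t)$ is small and positive, since $\tau$ is increasing with $\tau(0)=0$. Lemma~\ref{lem:spherical_h_nonconstant} then shows that $h(\cdot,\tau(t))$ is nonconstant, so $dh\neq0$ somewhere and $\n$ is not parallel. This gives (2).

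The main obstacle is small. One must check carefully that $\nu$, which is defined in $\S^3$, is normal to $F_t$ in $\R^4$ and is $\nabla^\perp$-parallel. This rests on the identity $D_X\xi=X/r$ together with the orthogonality relations verified in Step 2.
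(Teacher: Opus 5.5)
Your proof is correct and is essentially the paper's argument. It uses the same frame $\{\nu,\xi\}$, the same formula $\n=(h\nu-2\xi)/\sqrt{h^2+4}$ leading to $\nabla^\perp_X\n=2X(h)(h^2+4)^{-3/2}(2\nu+h\xi)$, and the same appeal to Lemma~\ref{lem:spherical_h_nonconstant}. You only verify the normal parallelism of $\nu$ and $F/r$ by hand, where the paper cites it as classical, and you get flatness from a single parallel normal field via the codimension-two skew-symmetry argument.

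One caution concerns your fallback of arguing ``on the component containing $p_1,p_2$'': it fails if these points lie in different components. For example, a disjoint union of isoparametric pieces with equal $h$ but different $|B|^2$ keeps $h$ locally constant under the flow, so $\n$ stays parallel. Connectedness of $M$, which the paper assumes implicitly, is therefore genuinely needed.
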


\begin{proof}
By Lemma~\ref{lem:spherical_euclidean_mcf}, $F_t$ is an immersed hypersurface of $\S^3(r(t))$ for every $t\in[0,T)$. It is a classical fact that, for a hypersurface of a sphere, the unit normal $\nu$ in the sphere and the radial unit normal $\xi$ are parallel with respect to the Euclidean normal connection, that is,
\begin{align*}
\nabla^\perp\nu
=
\nabla^\perp\xi
=
0.
\end{align*}
In particular, the Euclidean normal bundle is flat. Thus (1) follows.

Next we show~(2). As computed in the proof of Lemma~\ref{lem:spherical_euclidean_mcf}, the Euclidean mean curvature vector of $F_t$ is
\begin{align*}
H
=
\frac{1}{r}\bigl(h\nu-2\xi\bigr), 
\end{align*}
and therefore
\begin{align*}
|H|^2
=
\frac{h^2+4}{r^2}>0.
\end{align*}
Hence the principal normal is
\begin{align*}
\n
=
\frac{H}{|H|}
=
\frac{h\nu-2\xi}{\sqrt{h^2+4}}. 
\end{align*}
At $t=0$, we also have $\tau=0$, and hence $h$ is constant by assumption. Since $\nu$ and $\xi$ are parallel in the Euclidean normal bundle, we have 
\begin{align*}
\nabla^\perp\n=0.
\end{align*}
Thus the principal normal is initially parallel.

For every tangent vector $X$,
\begin{align*}
\nabla_X^\perp\n
=
\frac{2X(h)}{(h^2+4)^{3/2}}
\bigl(2\nu+h\xi\bigr).
\end{align*}
Hence the principal normal is parallel if and only if $X(h)=0$ for every tangent vector $X$, that is, if and only if $h$ is spatially constant. However, by Lemma~\ref{lem:spherical_h_nonconstant}, $h(\cdot,\tau)$ is nonconstant for all sufficiently small $\tau>0$. Therefore the principal normal is not parallel for all sufficiently small $t>0$.
\end{proof}

\subsection{An embedded torus}
We finally note that there exist embedded examples to which Theorem~\ref{thm:cmc_ppn} applies. Perdomo \cite{Perdomo10} constructed embedded non-isoparametric CMC tori in the unit three-sphere $\S^3$. Choose one such torus with nonzero mean curvature and denote the corresponding embedding by $\phi:M^2\longrightarrow\S^3\subset\R^4$. By Theorem~\ref{thm:cmc_ppn}, the corresponding Euclidean mean curvature flow obtained from $\phi$ has flat normal bundle for all times, while its principal normal is initially parallel but loses this property for all sufficiently small positive times. Hence Theorem~\ref{thm:ppn} follows.



\begin{thebibliography}{99}

\bibitem{AndrewsBaker10}
B.~Andrews and C.~Baker,
\emph{Mean curvature flow of pinched submanifolds to spheres},
J. Differential Geom. \textbf{85} (2010), no.~3, 357--395.

\bibitem{AndrewsLiWei14}
B.~Andrews, H.~Li and Y.~Wei,
\emph{$\mathcal{F}$-stability for self-shrinking solutions to mean curvature flow},
Asian J. Math. \textbf{18} (2014), no.~5, 757--778.

\bibitem{ArezzoSun13}
C.~Arezzo and J.~Sun,
\emph{Self-shrinkers for the mean curvature flow in arbitrary codimension},
Math. Z. \textbf{274} (2013), no.~3--4, 993--1027. 

\bibitem{BakerThesis}
C.~Baker,
\emph{The mean curvature flow of submanifolds of high codimension},
Ph.D. thesis, Australian National University, 2010,
arXiv:1104.4409.

\bibitem{Baker11}
\bysame,
\emph{A partial classification of type I singularities of the mean curvature flow in high codimension},
preprint, arXiv:1104.4592 (2011).

\bibitem{BakerNguyen17}
C.~Baker and H.~T.~Nguyen,
\emph{Codimension two surfaces pinched by normal curvature evolving by mean curvature flow},
Ann. Inst. H. Poincar\'e Anal. Non Lin\'eaire
\textbf{34} (2017), no.~6, 1599--1610.

\bibitem{BaoShi14}
C.~Bao and Y.~Shi,
\emph{Gauss map of translating solitons of mean curvature flow},
Proc. Amer. Math. Soc. \textbf{142} (2014), no.~12, 4333--4339.

\bibitem{ColdingMinicozzi12}
T.~H.~Colding and W.~P.~Minicozzi II,
\emph{Generic mean curvature flow I; generic singularities},
Ann. of Math. (2) \textbf{175} (2012), no.~2, 755--833.

\bibitem{DingYuan06}
W.~Ding and Y.~Yuan,
\emph{Resolving the singularities of the minimal Hopf cones},
J. Partial Differential Equations \textbf{19} (2006), no.~3, 218--231.

\bibitem{EckerHuisken89}
K.~Ecker and G.~Huisken,
\emph{Mean curvature evolution of entire graphs},
Ann. of Math. (2) \textbf{130} (1989), no.~3, 453--471.

\bibitem{EckerHuisken90}
\bysame,
\emph{A Bernstein result for minimal graphs of controlled growth},
J. Differential Geom. \textbf{31} (1990), no.~2, 397--400.

\bibitem{Huisken87}
G.~Huisken,
\emph{Deforming hypersurfaces of the sphere by their mean curvature},
Math. Z. \textbf{195} (1987), no.~2, 205--219.

\bibitem{Huisken90}
\bysame,
\emph{Asymptotic behavior for singularities of the mean curvature flow},
J. Differential Geom. \textbf{31} (1990), no.~1, 285--299.

\bibitem{Huisken93}
\bysame,
\emph{Local and global behaviour of hypersurfaces moving by mean curvature},
in \emph{Differential Geometry: Partial Differential Equations on Manifolds}
(Los Angeles, CA, 1990),
Proc. Sympos. Pure Math. \textbf{54}, Part~1,
Amer. Math. Soc., Providence, RI, 1993, pp.~175--191.

\bibitem{Kunikawa15}
K.~Kunikawa,
\emph{Bernstein-type theorem of translating solitons in arbitrary codimension with flat normal bundle},
Calc. Var. Partial Differential Equations \textbf{54} (2015), no.~2, 1331--1344.

\bibitem{Kunikawa17}
\bysame,
\emph{Translating solitons in arbitrary codimension},
Asian J. Math. \textbf{21} (2017), no.~5, 855--872.

\bibitem{LawsonOsserman77}
H.~B.~Lawson and R.~Osserman,
\emph{Non-existence, non-uniqueness and irregularity of solutions to the minimal surface system},
Acta Math. \textbf{139} (1977), no.~1--2, 1--17.

\bibitem{LiWei14}
H.~Li and Y.~Wei,
\emph{Classification and rigidity of self-shrinkers in the mean curvature flow},
J. Math. Soc. Japan \textbf{66} (2014), no.~3, 709--734.

\bibitem{MartinSavasHalilajSmoczyk15}
F.~Mart{\'i}n, A.~Savas-Halilaj and K.~Smoczyk,
\emph{On the topology of translating solitons of the mean curvature flow},
Calc. Var. Partial Differential Equations \textbf{54} (2015), no.~3, 2853--2882.

\bibitem{Perdomo10}
O.~M.~Perdomo,
\emph{Embedded constant mean curvature hypersurfaces on spheres},
Asian J. Math. \textbf{14} (2010), no.~1, 73--108.

\bibitem{SavasHalilajSmoczyk24}
A.~Savas-Halilaj and K.~Smoczyk,
\emph{Codimension two mean curvature flow of entire graphs},
J. London Math. Soc. \textbf{110} (2024), no.~5, e13000.

\bibitem{SchoenSimonYau75}
R.~Schoen, L.~Simon and S.-T.~Yau,
\emph{Curvature estimates for minimal hypersurfaces},
Acta Math. \textbf{134} (1975), no.~3--4, 275--288.

\bibitem{Smoczyk05}
K.~Smoczyk,
\emph{Self-shrinkers of the mean curvature flow in arbitrary codimension},
Int. Math. Res. Not. \textbf{2005} (2005), no.~48, 2983--3004.

\bibitem{Smoczyk12}
K.~Smoczyk,
\emph{Mean curvature flow in higher codimension: introduction and survey},
in \emph{Global Differential Geometry},
Springer Proc. Math. \textbf{17},
Springer, Heidelberg, 2012, pp.~231--274.

\bibitem{SmoczykWangXin04}
K.~Smoczyk, G.~Wang and Y.~L.~Xin,
\emph{Mean curvature flow with flat normal bundles},
preprint, arXiv:math/0411010v1 (2004).

\bibitem{SmoczykWangXin06}
\bysame,
\emph{Bernstein type theorems with flat normal bundle},
Calc. Var. Partial Differential Equations \textbf{26} (2006), no.~1, 57--67.

\bibitem{Wang04}
M.-T.~Wang,
\emph{Stability and curvature estimates for minimal graphs with flat normal bundles},
preprint, arXiv:math/0411169v2 (2004).

\bibitem{Xin05}
Y.~L.~Xin,
\emph{Bernstein type theorems without graphic condition},
Asian J. Math. \textbf{9} (2005), no.~1, 31--44.

\end{thebibliography}
\end{document}